\definecolor{ududff}{rgb}{0.30196078431372547,0.30196078431372547,1.}
\definecolor{xdxdff}{rgb}{0.49019607843137253,0.49019607843137253,1.}
\newtheorem{theorem}{Theorem}[section]
\newtheorem{proposition}[theorem]{Proposition}
\newtheorem{corollary}[theorem]{Corollary}
\newtheorem{lemma}[theorem]{Lemma}
\theoremstyle{definition}
\newtheorem{definition}[theorem]{Definition}
\newtheorem{remark}[theorem]{Remark}
\begin{document}
\title[minimal  Partition-Free groups ]{minimal  Partition-Free groups }
\author[A. Bahri, Z. Akhlaghi, B. Khosravi]{Afsane Bahri \& Zeinab Akhlaghi \& Behrooz Khosravi }
\address{ Dept. of Pure  Math.,  Faculty  of Math. and Computer Sci. \\
Amirkabir University of Technology (Tehran Polytechnic)\\ 424,
Hafez Ave., Tehran 15914, Iran \newline }
\email{afsanebahri@aut.ac.ir}
\email{z$\_$akhlaghi@aut.ac.ir}
\email{khosravibbb@yahoo.com}

\thanks{}
\subjclass[2010]{20C15, 20D05, 20D60.}

\keywords{Partition of group, classification of groups}

\begin{abstract}
Let $G$ be a finite group. A collection $\Pi=\{H_1,\dots,{H_r}\}$ of subgroups of $G$, where $r>1$, is said a non-trivial partition of $G$ if every non-identity element of $G$ belongs to one and only one $H_i$, for some $1\leqslant i\leqslant r$. We call a group $G$  that does not admit any non-trivial partition {\it a partition-free group}. In this paper, we study a partition-free group $G$ whose all proper non-cyclic subgroups admit non-trivial partitions. 
\keywords{Classification of groups\and Partitions of Groups}
\end{abstract}

\maketitle
\section{ Introduction}
only one $H_i$, for some $1\leqslant i\leqslant r$.  Obviously, cyclic groups do not admit a  non-trivial partition.  We call a non-cyclic group that does not admit a non-trivial  partition, a {\it partition-free group} 
or briefly a {\it PF-group}. We also call a subgroup $H$ of $G$ a {\it PF-subgroup}, if $H$ does not admit a non-trivial partition. Let $\Omega$ be a class of groups. A group is said minimal non-$\Omega$ if it is not an $\Omega$-group, while all its proper subgroups belong to $\Omega$.  Associating different properties to $\Omega$ and studying the structure of   minimal non-$\Omega$ groups have attracted the attention of many researchers.  One of the application of  studying such a groups  are using them in  arguments applying induction  on finite groups.

Minimal non-$\Omega$ groups studied for various of classes of $\Omega$,  for instance,  minimal non-cyclic and non-abelian groups are  analyzed in \cite{minimalcyclic}. Also, well-known Schmit 
theorem classifies minimal non-nilpotent groups. Moreover, minimal non-solvable  groups are determined in \cite{thompson}, and called minimal simple groups.  We  call $G$ a {\it minimal  PF-group}, if $G$ is a PF-group whose all proper non-cyclic subgroups admit non-trivial partitions.

Throughout the proofs, we come across special structure of a Frobenius groups several times and for more convenient we call them  minimal Frobenius groups, that are Frobenius groups  whose none of proper subgroups are Frobenius groups. It is easy to see that in such a group the Frobenius kernel is the only  minimal normal  subgroup of the group and Frobenius complements are groups of prime order. Our main result is:

\textbf{Main Theorem} Suppose that $G$ is a finite group. Then, $G$ is a minimal PF-group if and only if it is isomorphic to one of the following groups:
\begin{enumerate}
	\item[(1)] $Q_8$, the quaternion group of order $8$; 
	\item[(2)] $C_p\times C_{p^2}$, where $p$ is a prime;
	\item[(3)] $M_3(p)$, where $p\not =2$; 
	\item[(4)] $C_p^2\times C_q$, where $q$ and $p$ are  distinct primes;
	\item[(5)]A quasi-Frobenius group isomorphic to  $C_r\times (C_p: C_q)$, where $q,p$ and $r$ are distinct primes;
	\item[(6)] $\langle{x,y |\,\, x^q=y^{p^{n}}=1,\, y^{-1}xy=x^{r}}\rangle$, where $p,q$ are distinct primes, $n\geqslant 2$ a natural number,  $r\equiv1\mod{p}$, and  $r^p\equiv1\mod{q}$;
	\item[(7)] A quasi-Frobenius group $G$ in which $G/{\bf Z}(G)$ is a minimal Frobenius group of order $q^{\alpha}p$ for some primes $p,q$ and positive integer $\alpha$. Moreover, ${\bf Z} (G)\cong C_q$ and the Sylow $q$-subgroup of $G$ is a group of exponent $q$.

\end{enumerate}

\smallskip

Indeed, groups having a non-trivial partition are classified  (see \cite{baer1,baer2,kegel,suzuki}) completely, and it is the critical key to  prove our main result.  
We refer the readers to \cite{zappa} for more details.   

To pursue the main result, using CFSG and Thompson's classification of minimal simple groups, we show that minimal PF-groups are solvable. Beside the classification of groups admitting non-trivial partition, we apply the classification of finite $p$-groups with a cyclic maximal subgroup to prove the main theorem, recurrently.

Throughout the  paper, all groups are finite. By  $G^k$ we mean the direct product of $k$ copies of $G$, where $k$ is a natural number. In addition,  we  denote by $M:N$  the semidirect product of the groups $M$ by $N$ where the  action of $N$ on $M$ is certainly non-trivial and we use the notation $M\rtimes N$  for the semidirect product of $M$ by $N$  in general (whether the action of $N$ on $M$ is trivial or not).
If also $n $ is a natural number, we denote by $n_p$ the $p$-part of $n$, that is, $p^a$ if $p^a\mid n$ and $p^{a+1}$ does not divide $n$.      For the rest of notation, we follow \cite{atlas}.

\section{Preliminaries}

One of the classes of groups admitting non-trivial partitions is groups of Hughes-Thompson type which are defined as following:

\begin{definition} \rm(\cite{zappa}\rm)
	Let $G$ be a group and $p$ a prime divisor of $|G|$. The subgroup generated by all elements of $G$ whose order is not $p$ is said the {\it Hughes subgroup} and denoted by $H_p(G)$. The group $G$ is also said a group of {\it Hughes-Thompson type} if it is not a $p$-group and $H_p(G)<G$,  for some prime $p$.  
\end{definition}
It is proved that if $G$ is a group of Hughes-Thompson type, then $H_p(G)$ is nilpotent and $|G:H_p(G)|=p$ (see \cite{zappa}). Thus, $G\cong H_p(G): C_p$. As a consequence of these results, we have the following  Remark:

\begin{remark}\label{smallresults} Let G be a finite group.
	\item[(\textit{i})] Let $G$ be a group of Hughes-Thompson type isomorphic to $H_p(G): P$,  where $P\cong C_p$. Then, ${\bf{C}}_{H_p(G)}(P)$ is trivial or a $p$-group of exponent $p$;
	
	\item[(\textit{ii})] Suppose that $G\cong R: T$,  where $T$ is cyclic and $(|R|,|T|)=1$. Then,  $G$ is a group of Hughes-Thompson type if and only if  $T$ acts Frobeniusly on $R$, $R=H_p(G)$, and $|T|=p$ for some prime $p$;
	
	\item[(\textit{iii})] Let $G$ be a group of Hughes-Thompson type, where $H_p(G)< G$ for some prime $p$. Assume also that $N\unlhd G$. Then, $\overline{G}=G/N$ is a group  with $H_p(\overline{G})<   \overline{G}$. In addition, if $N\not\leq H_p(G)$, then  $\overline{G}$ is a $p$-group of exponent $p$;
	
	\item[(\textit{iv})] Let $G\cong M_1\times M_2$ in which $M_1$ and $M_2$ are non-trivial groups. Then, $G$ is  a group of  Hughes-Thompson type if and only if   for some $i\in \{1,2\}$, $M_i$ is a group of Hughes-Thompson type with $H_p(M_i)< M_i$ for some  prime  $p$ dividing $|M_i|$, and for $i\not= j\in \{1,2\}$,  $M_j$ is a $p$-group of exponent $p$.
	

\end{remark}
\begin{proof}
	
	(\textit{i}) If $P$ acts Frobeniusly on $H_p(G)$, then ${\bf{C}}_{H_p(G)}(P)$ is trivial. Hence, assume that $P=\langle x \rangle$ does not act Frobeniusly on $H_p(G)$. Suppose also that there exists an element $y\in {\bf{C}}_{H_p(G)}(P)$ of order $t\not =p$. Thus, by the definition of $H_p(G)$, $xy$ belongs to $H_p(G)$, and so $x\in H_p(G)$, a contradiction. Therefore, ${\bf{C}}_{H_p(G)}(P)$ is trivial or a $p$-group of exponent $p$. 
	
	\item[(\textit{ii})]  As  the "If part " of our claim easily can be concluded,  we concentrate  on the "only if  part". Suppose that $G$ is a group of Hughes-Thompson type with $H_p(G)<G$, where $p$ is a divisor of $|G|$. If $p\mid |R|$, then by the nilpotency (see \cite{zappa}) and normality of $H_p(G)$, $T\unlhd G$, and so $G\cong R\times T$. By (i), $p$ is a divisor of $|T|$, a contradiction. Hence, $p$ is a divisor of $|T|$, and so $R\leq H_p(G)$. Note that $T$ is cyclic, and so $|T|=p$ since otherwise the generator of $T$ belongs to $H_p(G)$. Consequently, $R=H_p(G)$. Moreover, by (i), $T$ acts Frobeniusly on $R$. 
	\item[(\textit{iii})] Assume that $G\cong H_p(G): P$ is a group of Hughes-Thompson type, where $P\cong C_p$. If $N\leq H_p(G)$, then we can see that $H_p(\overline{G})< \overline{G}$, easily.  Now, assume that $N$ contains an element $x\in G\setminus H_p(G)$.  Let $H=N\cap H_p(G)$. Then, $N\cong H: \langle x\rangle$. Without loss of generality, we may assume that  $P=\langle x \rangle$. Therefore, $G/H\cong H_p(G)/H \times PH/H$.  If there exists a prime $q\neq p$ such that $q\mid |H_p(G):H|$, then there exists an element $yH\in G/H$ of order $pq$ such that $y^{q}H=xH$. Hence, $y^{q}=xh$, where $h\in H$.  Noting  that by the definition of $H_p(G)$, $y$ belongs to $H_p(G)$, and so  $x\in H_p(G)$, a contradiction. Consequently, $p$ is the only prime divisor of $|H_p(G):H|$. Now, suppose that  there exists an element $yH\in H_p(G)/H$ whose order is at least $p^2$. Thus, $o(xyH)\geqslant p^2$, and so $o(xy)\geqslant p^2$, implying that $x\in H_p(G)$, a contradiction. Therefore, $\overline{G}$ is a $p$-group of exponent $p$ and so $H_p(\overline{G})=1$.

	\item[(\textit{iv})] Assume that $M_1$ is a group of Hughes-Thompson Type, $H_p(M_1)<M_1$, and $M_2$ is a group of exponent $p$. Then, $H_p(G)=H_p(M_1)\times M_2< G$, implying that $G$ is a group of Hughes-Thompson type. Conversely,   assume that $G$ is a group of Hughes-Thompson type, and so  $H_p(G)<G$ for some prime $p$. Without loss of generality, we may assume that $M_1$ is a subgroup of $H_p(G)$ and $M_2$ is not. Then, by (iii), $M_2$ is a $p$-group of exponent $p$ and $H_p(M_1)< M_1$. Since $G$ is not a $p$-group, $M_1$ is not a $p$-group. Therefore, $M_1$ is a group of Hughes-Thompson type, as desired. 
	
\end{proof}

Baer, Kegel, and Suzuki proved that:

\begin{lemma}\label{classification} \rm(\cite{baer1,baer2,kegel,suzuki}\rm) A finite group $G$ admits a non-trivial partition if and only if it is isomorphic to one of the following groups.
	\begin{enumerate}
		\item[(1)] A $p$-group with $H_p(G)\neq{G}$ and $|G|>p$;
		\item[(2)] A Frobenius group;
		\item[(3)] A group of Hughes-Thompson type;
		\item[(4)] ${\rm PGL}(2,p^h)$, where $p$ is an odd prime;
		\item[(5)] ${\rm PSL}(2,p^h)$, where $p$ is prime;
		\item[(6)] ${\rm Sz}(2^h)$, the Suzuki group of order $2^h$.
	\end{enumerate}
	In all the above cases, $h$ is a natural number.
\end{lemma}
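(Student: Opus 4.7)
The plan is to prove the two implications separately, with most of the work going into the forward direction. For the easy reverse direction, I would exhibit an explicit partition in each of the six listed families. In a $p$-group $G$ with $H_p(G)<G$, every element of $G\setminus H_p(G)$ has order $p$, so $\{H_p(G)\}\cup\{\langle x\rangle:x\in G\setminus H_p(G)\}$ (with duplicates removed) is a partition. In a Frobenius group, the Frobenius kernel together with the conjugates of a Frobenius complement is a partition. In a Hughes--Thompson group one combines these two constructions, partitioning $H_p(G)$ trivially (as one block) and the remaining elements by the cyclic subgroups they generate of order $p$. For $\mathrm{PSL}(2,q)$, $\mathrm{PGL}(2,q)$ and $\mathrm{Sz}(q)$, the partition is furnished by the three classes of maximal tori / unipotent subgroups (Singer cycle, split torus, and Sylow $p$-subgroup for $\mathrm{PSL}$; the analogous three classes for Suzuki groups), using that distinct conjugates meet trivially.

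For the forward direction, I would first record the structural basics of a non-trivial partition $\Pi=\{H_1,\dots,H_r\}$: the components pairwise intersect trivially, each is proper, and whenever a normal subgroup $N\trianglelefteq G$ meets a component $H_i$ non-trivially, the intersection is forced by the partition relation to lie inside a single $H_j$ of any refinement of $\Pi\cap N$, which gives a partition of $N$. Combined with analogous behaviour under quotients (components whose image is nontrivial give a partition of $\overline{G}$ whenever they cover $\overline{G}$), this means that the class of groups admitting a partition is closed under taking normal subgroups and quotients whenever a mild non-degeneracy condition holds. I would then split along solvability.

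In the solvable case I would first dispose of $p$-groups: if $G$ is a $p$-group of order $>p$ admitting a partition, then any element of order $>p$ generates a cyclic subgroup that must be contained in a single component, forcing all such elements into the union of finitely many components; an easy counting/centralizer argument then shows $H_p(G)<G$ and we land in case (1). For solvable $G$ that is not a $p$-group, I would use the partition-induced TI-structure on the Fitting subgroup $F(G)$ together with Hall's theorems to show that either $G$ has a normal Hall subgroup acted on Frobeniusly (giving case (2) or case (3) via Remark~2.2(ii)), or $G$ is itself of Hughes--Thompson type. The dichotomy ``Frobenius vs.~Hughes--Thompson'' is driven by whether the acting complement has prime order and whether its centralizer in the kernel vanishes, which is precisely the content of Remark~2.2(i)--(ii).

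The main obstacle is the nonsolvable case, which is essentially Suzuki's classification. Passing to a minimal simple section, the partition property translates into a TI-condition on a system of maximal subgroups, and then on centralizers of involutions; this forces $G$ to be a Zassenhaus group, i.e.~to act sharply $2$-transitively on the set of components containing a fixed Sylow. The bulk of the work, which I expect to be the deepest step by far, is Suzuki's theorem identifying such groups with $\mathrm{PSL}(2,q)$, $\mathrm{PGL}(2,q)$ or $\mathrm{Sz}(2^h)$ by means of a detailed analysis of involution centralizers, strongly embedded subgroups, and the characteristic-$2$ case that produces the Suzuki family. Implementing this cleanly without invoking CFSG wholesale, and in particular handling the delicate characteristic-$2$ case, is the principal technical barrier; I would follow the original Kegel--Suzuki line of argument, treating odd and even characteristic separately after a uniform reduction to the Zassenhaus-group setting.
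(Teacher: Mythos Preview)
The paper does not prove this lemma at all: it is stated as a citation of the classical Baer--Kegel--Suzuki classification (references \cite{baer1,baer2,kegel,suzuki} in the paper), with no argument given beyond the statement itself. So there is no ``paper's own proof'' to compare your proposal against; the authors simply import the result and use it as a black box throughout.

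That said, a few remarks on your sketch as a standalone outline. The reverse direction is fine. In the forward direction, your closure claims for partitions under normal subgroups and quotients are overstated: partitions do not in general descend to quotients or restrict to normal subgroups in the clean way you suggest, and handling this carefully is part of what makes the Baer--Kegel arguments nontrivial. Your $p$-group step (``an easy counting/centralizer argument then shows $H_p(G)<G$'') hides real work: that a $p$-group with a nontrivial partition satisfies $H_p(G)\neq G$ is itself one of the substantial results in this circle of ideas (Kegel), not a throwaway computation. Finally, in the nonsolvable case, the reduction is not quite to sharply $2$-transitive (Zassenhaus) groups in the usual sense; Suzuki's route goes through the notion of groups with an \emph{independent} (or \emph{F}-) partition and a careful analysis of strongly embedded subgroups and involution centralizers, which is related to but distinct from the Zassenhaus framework you describe. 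None of this is fatal to your plan as a high-level roadmap, but several of the ``easy'' steps are where the original papers spend most of their effort.
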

Therefore, if $G$ is a solvable group admitting a non-trivial partition, then either $G$ satisfies one of the parts (1)-(3) of Lemma \ref{classification}, or $G\cong $PGL$(2,3)\cong S_4$. We also note that among given groups in the above lemma,  the only nilpotent  groups are the ones  satisfy part (1).  Hence, the only non-solvable groups admitting non-trivial partitions are  the almost simple groups of types ${\rm PGL}(2,p^h)$,  ${\rm PSL}(2,p^h)$ and ${\rm Sz}(2^h)$. We use theses results throughout the paper several times.    


\begin{lemma}\label{p-groups}\rm(\cite[Theorem 5.3.4]{robinson}\rm) 
	A group of order $p^n$ has a cyclic maximal subgroup if and only if it is one of the following types:
	\begin{enumerate}
		\item[(1)] $C_{p^n}$;
		\item[(2)] $C_p\times{C_{p^{n-1}}}$;
		\item[(3)] $M_n(p)=\langle{x,y |\,\, x^p=y^{p^{n-1}}=1,\, x^{-1}yx=y^{1+p^{n-2}}}\rangle$ with $n\geqslant 3$;
		\item[(4)] $Q_{2^n}\cong \langle{x,y |\,\, x^{2^{n-1}}=1, x^{2^{n-2}}=y^{2},\,y^{-1}xy=x^{-1}\rangle}$ with $n\geqslant 3$;
		\item[(5)] $D_{2^n}\cong \langle{x,y |\,\, x^{2^{n-1}}=y^{2}=1, \,y^{-1}xy=x^{-1}\rangle}$ with $n\geqslant 3$;
		\item[(6)] $SD_{2^n}\cong \langle{x,y |\,\,x^{2^{n-1}}=y^{2}=1,\,y^{-1}xy=x^{2^{n-2}-1}\rangle}$ with $n\geqslant 3$.
	\end{enumerate}
\end{lemma}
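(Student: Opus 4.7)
The plan is to classify $G$ by analysing the conjugation action of a complement on its cyclic maximal subgroup. Let $|G|=p^{n}$ and let $A=\langle y\rangle$ be a cyclic maximal subgroup of order $p^{n-1}$. Since $[G:A]=p$ is the smallest prime dividing $|G|$, $A\unlhd G$; for $n\leqslant 2$ the group $G$ is abelian, so the fundamental theorem of finite abelian groups produces cases (1) and (2). From here on I assume $n\geqslant 3$ and fix $x\in G\setminus A$.

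Conjugation by $x$ gives an automorphism $\phi\in\mathrm{Aut}(A)\cong(\mathbb{Z}/p^{n-1}\mathbb{Z})^{\ast}$; write $\phi(y)=y^{r}$. Since $A$ is abelian, conjugation by $x^{p}\in A$ is trivial, hence $\phi^{p}=\mathrm{id}$ and $r^{p}\equiv 1\pmod{p^{n-1}}$. When $p$ is odd, $\mathrm{Aut}(A)$ is cyclic and its subgroup of order $p$ is precisely $\{1+kp^{n-2}:0\leqslant k\leqslant p-1\}$. When $p=2$, the $2$-torsion of $\mathrm{Aut}(A)$ is $\{\pm 1\}$ for $n=3$, and $\{\pm 1,\ 2^{n-2}\pm 1\}$ for $n\geqslant 4$.

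For each possibility of $r$ I would determine $G$ by pinning down $x^{p}$. If $r=1$, then $G$ is abelian and yields $C_{p^{n}}$ or $C_{p}\times C_{p^{n-1}}$. For odd $p$ and $r\neq 1$, replacing $x$ by a suitable power lets me assume $r=1+p^{n-2}$; writing $x^{p}=y^{s}$, the requirement $\phi(x^{p})=x^{p}$ gives $(r-1)s\equiv 0\pmod{p^{n-1}}$, forcing $p\mid s$, and then the identity $(xy^{t})^{p}=x^{p}\,y^{t(1+r+r^{2}+\cdots+r^{p-1})}$ lets me choose $t$ so that $(xy^{t})^{p}=1$, delivering $M_{n}(p)$. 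For $p=2$ the same template gives: $r=-1$ yields $D_{2^{n}}$ or $Q_{2^{n}}$ according to whether $x^{2}=1$ or $x^{2}=y^{2^{n-2}}$ (the only two possibilities in $\mathbf{Z}(G)=\langle y^{2^{n-2}}\rangle$); $r=2^{n-2}-1$ gives $SD_{2^{n}}$; and $r=2^{n-2}+1$ (with $n\geqslant 4$) gives $M_{n}(2)$.

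The technical core, and the step I expect to require the most bookkeeping, is the normalisation of $x^{p}$: it rests on evaluating $1+r+\cdots+r^{p-1}=(r^{p}-1)/(r-1)$ modulo $p^{n-1}$ via the binomial expansion of $r^{p}=(1+p^{n-2})^{p}$. For odd $p$ this sum is congruent to $p$ modulo $p^{n-1}$, which is exactly what makes the adjustment possible; for $p=2$ it can vanish, and this is precisely why the quaternion branch $Q_{2^{n}}$ appears alongside the dihedral one, and why $M_{3}(2)$ coincides with $D_{8}$ rather than appearing as a separate family. Some care is also needed to avoid duplicating isomorphism types between the ranges $n=3$ and $n\geqslant 4$.
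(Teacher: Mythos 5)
Your outline is correct and is essentially the standard argument found in the cited source (Robinson, Theorem 5.3.4): the paper itself gives no proof, only the reference, and Robinson's proof proceeds exactly as you describe, by identifying the order-$p$ elements of $\mathrm{Aut}(C_{p^{n-1}})$, constraining $x^p$ via $\phi(x^p)=x^p$, and normalising $x^p$ using $(xy^t)^p=x^p y^{t(1+r+\cdots+r^{p-1})}$. The only bookkeeping points you must carry through are the ones you already flagged: the sum $1+r+\cdots+r^{p-1}$ is $\equiv p\pmod{p^{n-1}}$ for odd $p$ but vanishes when $r=-1$ and $p=2$ (which is what separates $D_{2^n}$ from $Q_{2^n}$), and the coincidence $M_3(2)\cong D_8$ at the boundary $n=3$.
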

\begin{corollary}\label{smallresultse}
	Let $G$ be a finite group. 
	\item[(\textit{i})] If $G$ is an abelian group, admitting a non-trivial partition, then $G$ is an elementary abelian $p$-group, where $p$ is a prime.
	
	\item[(\textit{ii})]  Suppose that  $G$ is  a $p$-group,  containing a cyclic maximal subgroup.  Then, $G$ admits a non-trivial partition if and only if  $G\cong C_p^2$ or $D_{2^n}$, for some $n\geqslant 3$.

\end{corollary}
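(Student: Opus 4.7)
\smallskip
\noindent\textbf{Proof plan.}

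For (\textit{i}), the plan is to use Lemma~\ref{classification} to narrow the possibilities and then force exponent $p$. Since $G$ is abelian, it is neither a Frobenius group, nor one of the almost simple groups $\mathrm{PGL}(2,p^h)$, $\mathrm{PSL}(2,p^h)$, $\mathrm{Sz}(2^h)$; and by Remark~\ref{smallresults}(\textit{ii}) a Hughes-Thompson group carries a non-trivial Frobenius-type action, hence cannot be abelian. Therefore $G$ must be of type~(1) of Lemma~\ref{classification}, i.e., an abelian $p$-group with $H_p(G)<G$ and $|G|>p$. To conclude, I would show that such a $G$ has exponent $p$: if $G$ contained an element $a$ of order $p^{n_1}\geq p^2$, then for every $c\in G$ of order $p$ the identity $p(a+c)=pa$ (writing $G$ additively) gives $o(a+c)=p\cdot o(pa)=p^{n_1}\geq p^2$, so both $a$ and $a+c$ lie in $H_p(G)$ and therefore $c=(a+c)-a\in H_p(G)$ as well. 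Together with the elements of order $\geq p^2$ this forces $H_p(G)=G$, a contradiction. Hence $G$ has exponent $p$, i.e., $G$ is elementary abelian.

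For (\textit{ii}), the plan is to combine Lemma~\ref{p-groups} with Lemma~\ref{classification}: since $G$ is a $p$-group, it admits a non-trivial partition precisely when $H_p(G)<G$ (and $|G|>p$). I would check this for each of the six families of Lemma~\ref{p-groups}. The cyclic group $C_{p^n}$ admits no non-trivial partition. For $C_p\times C_{p^{n-1}}$ part~(\textit{i}) applies, and a partition exists iff $n=2$, giving $C_p^2$. For $D_{2^n}$ every element outside $\langle x\rangle$ is an involution, so $H_2(D_{2^n})=\langle x\rangle<D_{2^n}$ and a partition exists. For $Q_{2^n}$ the unique involution is central and every other non-identity element has order $\geq 4$; these elements generate the whole group, so $H_2(Q_{2^n})=Q_{2^n}$. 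Finally, for $SD_{2^n}$ and $M_n(p)$ (with $p$ odd, or with $p=2$ and $n\geq 4$), a short computation from the defining relations produces $(xy)^p$ explicitly and shows that $o(xy)\geq p^2$; then $xy\in H_p(G)$ and $x=(xy)y^{-1}\in H_p(G)$, whence $H_p(G)=G$. Thus only $C_p^2$ and $D_{2^n}$ admit non-trivial partitions.

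The main technical obstacle should be the commutator computation of $(xy)^p$ for $M_n(p)$ and $SD_{2^n}$: one uses the centrality of $[y,x]$ (a power of $y^p$ in $M_n(p)$, and a power of $x$ in $SD_{2^n}$) together with a Hall-Petresco-type identity. The analysis splits according to whether $p$ is odd or $p=2$, and one must also note the coincidence $M_3(2)\cong D_8$, which is already accounted for by the dihedral family.
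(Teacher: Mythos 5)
Your proposal is correct, and for part (\textit{ii}) it follows essentially the same route as the paper: run through the six families of Lemma~\ref{p-groups}, observe $H_2(D_{2^n})\leq\langle x\rangle$ for the dihedral case, and show $H_p(G)=G$ for $M_n(p)$, $Q_{2^n}$ and $SD_{2^n}$ by exhibiting two generators of order at least $p^2$ (the paper uses exactly the pairs $\{y,xy\}$, $\{x,y\}$, $\{x,xy\}$ and asserts the order bounds without the commutator computation you flag; that computation is routine since $[y,x]$ is central of order $p$). Your explicit remark that $M_3(2)\cong D_8$ is worth keeping --- the paper's blanket claim $o(xy)\geqslant p^2$ for $M_n(p)$ actually fails there, and the case is only saved because that group is dihedral. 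Where you genuinely diverge is part (\textit{i}): the paper simply cites Heden's paper on partitions of abelian groups, whereas you derive the result from Lemma~\ref{classification} plus a direct argument that an abelian $p$-group with $H_p(G)<G$ has exponent $p$; this is a clean, self-contained alternative. One small repair: to rule out an abelian group of Hughes--Thompson type you invoke Remark~\ref{smallresults}(\textit{ii}), whose hypotheses (a coprime semidirect decomposition with cyclic complement) are not automatic; either use Remark~\ref{smallresults}(\textit{i}) (for abelian $G$ one has ${\bf C}_{H_p(G)}(P)=H_p(G)$, forcing $G$ to be a $p$-group, contrary to the definition) or argue directly that if $o(x)=p$ and $o(y)=q\neq p$ then $xy\in H_p(G)$ forces $x\in H_p(G)$.
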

\begin{proof}
	(\textit{i}) See \cite[p.1]{abeliangroups}.
	\item[(\textit{ii})] Firstly, we show that the groups isomorphic to $C_p\times C_p$ or the dihedral group of order $2^n$, for some natural number $n\geqslant 3$, admit non-trivial partitions. By (i), the group $C_p\times C_p$ admits a non-trivial partition. Moreover, $H_2(D_{2^n})$ is a normal subgroup of $D_{2^n}$ of order $2^{n-1}$. Hence, by Lemma \ref{classification}, $D_{2^n}$ admits a non-trivial partition.
	
	Conversely, to prove the theorem, we just need to discuss about non-cyclic groups. In the case $G\cong C_p\times C_p^{n-1}$, by (i), we get that if $G$ admits a non-trivial partition, then $G\cong C_p^2$, as wanted. For the remaining non-cyclic $p$-groups $G$ satisfying the hypotheses of Lemma \ref{p-groups}, except the dihedral group of order $2^n$, we show that $G=H_p(G)$. If $G\cong M_n(p)$, then $o(y), o(xy)\geqslant p^2$, and so $x,y\in H_p(G)$. In the case $G\cong Q_{2^n}$, since $o(x), o(y)\geqslant 4$, we have  $x,y\in H_2(G)$. In addition, for $G\cong {SD}_{2^n}$, $o(x), o(xy)\geqslant 4$, and so $x,y\in H_2(G)$. Hence, in the above cases, $G=H_p(G)$.

\end{proof}


\section{\bf{  Some Examples of minimal PF-groups}}
Here, we study some groups we deal with in the next sections. And showing that why these groups are minimal PF-groups, that is, the non-cyclic groups not admitting a non-trivial partition, but all of its  proper non-cyclic subgroups admit non-trivial partitions. 

\bigskip

(\textit{1}) The first group is the quaternion group of order $8$. By Corollary \ref{smallresultse}(ii), it does not admit a non-trivial partition. Recall that all proper subgroups of $Q_8$ are cyclic, and so $Q_8$ is a minimal PF-group.

\bigskip

(\textit{2}) $C_p\times C_{p^2}$, where $p$ is a prime. By Corollary \ref{smallresultse}(i), the group dose not admit a non-trivial partition. We also note that proper subgroups of the group are cyclic or isomorphic to $C_p\times C_p$ that by Corollary \ref{smallresultse}(i), the subgroups isomorphic to $C_p\times C_p$ admit non-trivial partitions, as  desired.

\bigskip


(\textit{3}) $M_3(p)$ with $p\neq 2$. By Corollary \ref{smallresultse}(ii), $M_3(p)$, where $p\neq 2$, does not admit a non-trivial partition. Note that proper subgroups of $M_3(p)$ are cyclic or isomorphic to $C_p\times C_p$, which is discussed in the previous case. Therefore, $M_3(p)$, with $p\neq 2$, is a minimal PF-group.  

\bigskip


\bigskip

(\textit{4}) $G\cong C_p^2\times C_q$, where $p$ and $q$ are distinct primes. Corollary \ref{smallresultse}(i) implies that the group does not admit a non-trivial partition, while the only  proper non-cyclic subgroup of $G$ admits a  non-trivial partition. Thus, $G$ is a minimal PF-group.

\bigskip

(\textit{5}) The quasi-Frobenius group isomorphic to $C_r\times (C_p: C_q)$, where $r,p$ and $q$ are distinct primes. Since the group is solvable, if it admits a non-trivial partition, according to the expressed  argument following Lemma  \ref{classification}, it must be a group of Hughes-Thompson type. Remark \ref{smallresults}(ii) and Remark \ref{smallresults}(iv) implies that $r=q$ which is impossible. Note that the only  proper non-cyclic subgroup of $G$ is a  Frobenius group, and  recalling  Lemma \ref{classification}, this subgroup admits  a non-trivial partition. Thus, the quasi-Frobenius group isomorphic to $C_r\times (C_p: C_q)$, where $r,p$ and $q$ are distinct primes, is a minimal PF-group.

\bigskip

(\textit{6}) The group $G$ isomorphic to $\langle{x,y |\,\, x^q=y^{p^{n}}=1,\, y^{-1}xy=x^{r}}\rangle$, where $p,q$ are distinct primes, $n\geqslant 2$ a natural number,  $r\equiv1 \mod{p}$, and  $r^p\equiv1\mod{q}$. 
We know that $G\cong Q: P$, where $Q\in {\rm Syl}_q(G)$ and $P\in {\rm Syl}_p(G)$. Also, ${\bf{Z}}(G)\neq 1$ since $[x,y^p]=1$. By \cite{minimalcyclic}, $G$ is a minimal non-cyclic group, and so it is enough to show that $G$ is a PF-group.
Suppose that $G$ admits a non-trivial partition.  Since $G$ is solvable, by the expressed argument following Lemma \ref{classification}, it must be a group of Hughes-Thompson type. Remark \ref{smallresults}(ii) leads us to the fact that $G$ is a Frobenius group which is impossible, as ${\bf{Z}}(G)\neq 1$. Hence, $G$ is a minimal PF-group.

\bigskip

(\textit{7}) A quasi-Frobenius group $G$ in which $G/{\bf Z}(G)$ is a minimal Frobenius group of order $q^{\alpha}p$ for some primes $p,q$ and positive integer $\alpha$, also  ${\bf Z} (G)\cong C_q$ and the Sylow $q$-subgroup of $G$ is a group of exponent $q$. We know that, $G\cong Q: P$, where $Q\in {\rm Syl}_q(G)$ and $P\in {\rm Syl}_p(G)$. Similar to the above discussion, if $G$ admits a non-trivial partition, then it must be a group of Hughes-Thompson type (not a Frobenius group) which is a contradiction by Remark \ref{smallresults}(ii). Hence, $G$ is a PF-group. Note that since $Q$ is a group of exponent $q$, $H_q(Q)=1$, and so by Lemma \ref{classification}, all  the non-cyclic $q$-subgroups of $G$ admit  non-trivial partitions. It is remain to investigate the  proper subgroups of $G$ whose order is divided by  $pq$. Suppose that $H$ is  such a subgroup and    $Q_H\in {\rm Syl}_q(H)$. First, assume that ${\bf{Z}}(G)\nleq Q_{H}$, then $H$ is a Frobenius group and by Lemma \ref{classification}, it admits a non-trivial partition. Now, suppose that ${\bf{Z}}(G)\leq Q_{H}$. Without loss of generality, we may assume that $P\in $ Syl$_p(H)$. Then,  $P\leq N_G(Q_H)$. Note that $Q/{\bf{Z}}(G)$ is  an elementary abelian group, implying that  $Q\leq N_G(Q_H)$. Hence, $Q_H\unlhd G$. But $Q/{\bf{Z}}(G)$ is a minimal normal subgroup of $G/{\bf Z}(G)$, implying that $Q_H=Q$, and so $G=H$, a contradiction. Therefore, non-cyclic proper subgroups of $G$ admit non-trivial partitions, and so $G$ is a minimal PF-group.



\section{\bf{Solvability of  minimal PF-groups}}
Our main goal in this section is to prove the solvability of minimal PF-groups. 

\begin{lemma}\label{almostsimple}
	Let $G$ be an almost simple group, with socle $S$ which is isomorphic to either {\rm Sz}$(2^\alpha)$ or {\rm PSL}$(2,p^\alpha)$, where $p$ is a prime and $\alpha\geqslant 1$ an integer. Then, $G$ is not a minimal PF-group.
\end{lemma}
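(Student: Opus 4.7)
The plan is to show that $G$ is not a minimal PF-group by producing either a non-trivial partition of $G$ itself (so $G$ fails to be PF) or a proper non-cyclic subgroup of $G$ that is PF (contradicting minimality). I split into the natural cases coming from the inclusion $S\le G\le \mathrm{Aut}(S)$. If $G=S$, then $G$ appears directly in the list of Lemma \ref{classification} and so admits a non-trivial partition. If $S\cong \mathrm{PSL}(2,p^\alpha)$ with $p$ odd and $G=\mathrm{PGL}(2,p^\alpha)$, the same lemma again yields a partition. So from now on I assume $G>S$ and $G$ contains an outer element with non-trivial field-automorphism component.

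Using that $\mathrm{Out}(S)$ is cyclic for $\mathrm{Sz}(2^\alpha)$ and for $\mathrm{PSL}(2,2^\alpha)$, and is $C_2\times C_\alpha$ for $\mathrm{PSL}(2,p^\alpha)$ with $p$ odd, I choose $\tau\in G\setminus S$ of prime order $r$ whose image in $\mathrm{Out}(S)$ is a field automorphism of order $r\mid\alpha$; this is immediate for $r$ odd, and for $r=2$ one either finds the pure field automorphism directly in $G/S$ or, in the twisted extension, runs the same argument with respect to the ``diagonal times field'' involution, whose fixed subgroup is a unitary-type Lie subgroup and which still normalises the standard Sylow $p$-subgroup of $S$. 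Let $T$ be a Sylow $p$-subgroup of $S$ stabilised by $\tau$ (for instance, the group of upper unitriangular matrices in the $\mathrm{PSL}$ case, or the standard Sylow $2$-subgroup of $\mathrm{Sz}$). The candidate PF subgroup is $H:=T\rtimes\langle\tau\rangle\le G$. It is proper in $G$, since $|H|=|T|\cdot r$ is strictly smaller than $|G|$, and non-cyclic, since $T$ is either elementary abelian of rank $\alpha\ge 2$ or a non-abelian Suzuki $2$-group.

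To finish the contradiction I verify that $H$ does not appear in any of the six classes of Lemma \ref{classification}. The critical observation is that $\tau$ fixes pointwise the subfield Sylow of order $p^{\alpha/r}$ (or its Suzuki analogue) inside $T$, so $\tau$ does \emph{not} act fixed-point-freely on $T$; hence $H$ is not a Frobenius group. When $r\ne p$ we have $(|T|,r)=1$, and Remark \ref{smallresults}(ii), together with the failure of the Frobenius action, immediately yields that $H$ is not of Hughes--Thompson type; since $H$ has two distinct prime divisors it is not a $p$-group, and the large normal $p$-subgroup $T$ of index $r$ rules out the almost-simple types (4)--(6). When $r=p$ the group $H$ is a $p$-group of order $p^{\alpha+1}$, and a direct trace computation using $(a,\tau)^p=(\mathrm{Tr}(a),1)$, where $\mathrm{Tr}\colon\mathbb{F}_{p^\alpha}\to\mathbb{F}_{p^{\alpha/r}}$ is the trace to the fixed subfield, shows that $H$ contains elements of order $p^2$ whenever $\mathrm{Tr}(a)\ne 0$; such an element together with $T$ generates all of $H$, so $H_p(H)=H$ and type (1) of Lemma \ref{classification} is excluded as well.

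The main obstacle, as I see it, is precisely this last verification that $H$ is PF. The split between the coprime case $r\ne p$, handled cleanly through Remark \ref{smallresults}(ii), and the purely $p$-group case $r=p$, which forces the trace computation inside a Sylow $p$-subgroup of $G$, is the delicate part. A secondary subtlety is the mixed ``diagonal times field'' $2$-extension when $p$ is odd, where one has to identify the fixed subfield Sylow inside a twisted Lie subgroup; this follows the same template but requires a short separate check. The rest of the argument — the case analysis on $\mathrm{Out}(S)$ and the two easy cases $G=S$ or $G=\mathrm{PGL}$ — is routine.
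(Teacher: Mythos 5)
Your overall strategy (exhibit a proper non-cyclic PF-subgroup, built from a Sylow $p$-subgroup of $S$ extended by an outer element of prime order) is different from the paper's, which instead uses ${\rm Sz}(2)\times\langle\phi\rangle$, ${\rm PSL}(2,p)\times\langle\phi\rangle$, a Borel subgroup, or a torus extended by $\langle\phi\rangle$. But there is a genuine gap exactly where you flag a ``short separate check'': the diagonal-times-field extension. In that case ($p$ odd, $\alpha$ even, $G/S=\langle\overline{\delta\phi}\rangle\cong C_2$, e.g.\ $G\cong M_{10}$ for $p^\alpha=9$) there is in general \emph{no} element of prime order in $G\setminus S$ at all: in $M_{10}$ every outer element has order $8$, the Sylow $2$-subgroups being semidihedral of order $16$. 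So your element $\tau$ of prime order with non-trivial field component simply does not exist, and $H=T\rtimes\langle\tau\rangle$ cannot be formed; the remark that the fixed subgroup of $\delta\phi$ is ``unitary-type'' does not repair this, because the issue is the order of a lift of $\delta\phi$ to $G$, not its fixed points. This is precisely the case on which the paper spends most of its effort, working with the split torus $L$, showing $T=L\langle\phi\rangle$ would have to be of Hughes--Thompson type, forcing $p^\alpha=9$ and $G\cong M_{10}$, and then deriving the contradiction from the semidihedral Sylow $2$-subgroup via Corollary \ref{smallresultse}(ii).

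A second, smaller flaw is in your $r=p$ verification. From an element $a\tau$ of order $p^2$ you conclude ``such an element together with $T$ generates all of $H$, so $H_p(H)=H$''; but $T$ is elementary abelian, so none of its non-trivial elements contribute to $H_p(H)$, and what you must show is that the elements of order $p^2$ alone generate $H$. This can be salvaged (differences $(a\tau)(b\tau)^{-1}$ sweep out enough of $T$) except when $p=2$ and $\alpha=2$, where $H\cong D_8$, $H_2(H)\ne H$, and $H$ does admit a partition — this is the configuration $G\supseteq{\rm P\Gamma L}(2,4)\cong{\rm PGL}(2,5)$, which the paper disposes of by observing that $G$ then admits a non-trivial partition outright. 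Your coprime case $r\ne p$ (covering in particular all of ${\rm Sz}(2^\alpha)$, since $\alpha$ is odd there) is correct and arguably cleaner than the paper's treatment, but as written the proof does not close the two cases above.
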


\begin{proof}
	On the contrary, suppose that $G$ is a minimal PF-group.
	By Lemma \ref{classification}, ${\rm Sz}(2^\alpha)$ and ${\rm PSL}(2,p^\alpha)$ are not minimal PF-groups, and so we may assume that  $S< G\leq $Aut$(S)$.   Firstly, suppose that $S={\rm Sz}(2^{\alpha})$. Therefore, $G$ is an extension of $S$ by $\langle \phi\rangle$, where $\phi$ is a non-trivial field automorphism of $S$. Thus, $H\cong {\rm Sz}(2)\times\langle\phi\rangle$ is a proper subgroup of $G$. Note that Sz$(2)\cong C_5: C_4$ is a Frobenius group. Looking at the  classification of solvable groups admitting a non-trivial partition and recalling  Remark \ref{smallresults}(ii) and Remark \ref{smallresults}(iv),  we see that $H$ does not admit a non-trivial partition, a contradiction. 
	
	Now, let $S={\rm PSL}(2,p^{\alpha})$. First,  assume that $G/S$ contains a non-trivial field automorphism $\phi$, then by Lemma \ref{classification} and Remark \ref{smallresults}(iv), $H\cong {\rm PSL}(2,p) \times\langle \phi\rangle$ is a proper PF-subgroup of $G$, unless $p\in\{2,3\}$ and $\phi$ is an element of order $2$ or $3$, respectively. If $p=3$, then $H$ contains a group isomorphic to $C_2\times C_2 \times C_3$, which is one of the PF-groups mentioned in the previous section. Hence, $p=2$ and $\phi$ has order $2$. Note that ${\rm PSL}(2,p^{\alpha})$ contains a subgroup $L$ isomorphic to the Frobenius group $C_{2^{\alpha}}:C_{2^{\alpha}-1}$ and $\langle \phi\rangle$  acts on $L$ by conjugation. Therefore, $T=L\langle \phi\rangle$ is a  proper  non-cyclic subgroup of $G$, and so it admits a non-trivial partition.  Looking at the structure of solvable groups admitting a non-trivial partition, we see that the only possibility for $T$ is being  either  a  group of Hughes-Thompson Type  or isomorphic to $S_4$ (it is not a Frobenius group). If the latter case occurs, then $\alpha=2$ and $G\cong {\rm PGL}(2,5)$ which is a group admitting a non-trivial partition, a contradiction. Thus, $T$ is a group of Hughes-Thompson Type, and $H_s(T)$ is a proper subgroup of $T$ of index $s$ for some prime $s$ dividing $|T|$. Regarding that $H_s(T)$ is a nilpotent group, we  conclude  that $H_s(T)$ is a Sylow $2$-subgroup of $T$ and $s=2^{\alpha}-1$. Therefore, $T\cong P: R$, where $P$ is the Sylow $2$-subgroup of $T$ and $R$  is a cyclic  subgroup of order $s=2^{\alpha}-1$. Observing Remark \ref{smallresults}(ii), we get that   $T$ must be a Frobenius group, which is not possible by the structure of $T$. Hence, $G/S$ does not contain a  non-trivial field automorphism.

	Thus, $G/S$ is generated by a field-diagonal automorphism (since ${\rm PGL}(2,p^\alpha)$ admits a non-trivial partition). Let $\phi$ be a field-diagonal automorphism generating   $G/S$.   We  may assume that $\phi$ is a $2$-element. Therefore, $p$ is odd, $G/S$ is a cyclic group of order $2$, and $\alpha$ is even.
	
	Now, let $L=\{d_{\omega} \ | \ \omega\in F^*_{p^{\alpha}}\}$, where
	
	\[ d_{\omega}= \left [\begin{array}{cc} \omega & 0 \\ 0 & \omega ^{-1} \end{array} \right]  {\bf Z}(\text{SL}(2,p^{\alpha})).\]
	
	Hence, $d_{\omega}^\phi=d_{\omega^{p^{\alpha/2}}}$ and $|L|=(p^\alpha-1)/2$. Thus, $T=L\langle\phi\rangle$ is a proper subgroup of $G$. By the assumption, $T$ is cyclic or admits a non-trivial partition. In the former case, $L\leq \textbf{C}_G(\phi)$, and so $d_{\omega}^\phi=d_{\omega}=d_{\omega^{p^{\alpha/2}}}$, for every $\omega \in F^*_{p^{\alpha}} $, implying that $o(\omega)\mid 2(p^{\alpha/2}-1)$, for all $\omega \in F^*_{p^{\alpha}}$, a contradiction. Therefore, $T$ must admit a non-trivial partition. Considering that $L$ is a cyclic group of order  divisible by $2$ ($\alpha$ is even and so $4$ divides $p^\alpha-1$) and $\phi$ is a $2$-element,  $T$ is not a Frobenius group, and so $T$ is either a $2$-group or a group of Hughes-Thompson type (by  Lemma \ref{classification}). Let $T$ be a $2$-group. Since $\alpha$ is even, $\alpha=2$ and $p=3$ by \cite[Lemma 2.10]{dr}, and so $G\cong M_{10}$. But Sylow $2$-subgroups of $M_{10}$ are isomorphic to the semidihedral group of order 16 which is impossible since it does not admit a non-trivial partition by Corollary \ref{smallresultse}(ii). Hence, $T$ is a group of Hughes-Thompson type. Since $|T:H_r(T)|\neq r$, for every odd prime divisor $r$ of $|T|$, we must have $|T: H_2(T)|=2$, and so $o(\phi)=2$ and $H_2(T)=L$. In addition, by Remark \ref{smallresults}(i), $\textbf{C}_T(\langle\phi\rangle)$ does not contain any element of order $t\not=2$. Let $\omega_0\in F_{p^\alpha}^*$  such that  $o(\omega_0)=2(p^{\alpha/2}-1)$,  then $d_{\omega_0}^\phi=d_{{\omega_0}^{p^{\alpha/2}}}=d_{\omega_0}$. Hence, $o(d_{\omega_0})=2$. Since $o(d_{\omega_0})$ is divided by $o(\omega_0)/2$, we get that $p^{\alpha/2}-1=2$, and so  $p^\alpha=9$. Therefore, $G\cong M_{10}$, causing a contradiction, by  the same  argument as before.    
\end{proof}

\begin{lemma}\label{perfect}
	Let $G$ be a non-solvable minimal PF-group. Then, $G$ is perfect.
\end{lemma}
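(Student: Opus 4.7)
The strategy is to argue by contradiction. Suppose $G$ is a non-solvable minimal PF-group with $G' < G$, and set $L := G^\infty$, the solvable residual of $G$ (the smallest normal subgroup of $G$ with solvable quotient). Since $G$ is non-solvable, $L$ is non-trivial, and being the terminal term of the derived series it is perfect, hence non-abelian and non-cyclic. If $L = G$ we are done; otherwise $L$ is a proper non-cyclic subgroup of $G$ and, by the minimal PF hypothesis, admits a non-trivial partition. Applying Lemma \ref{classification} and using that $L$ is perfect (which excludes $\mathrm{PGL}(2, p^h)$ for odd $p$, and noting $\mathrm{PGL}(2, 2^h) = \mathrm{PSL}(2, 2^h)$), I conclude that $L$ is non-abelian simple of type $\mathrm{PSL}(2, p^h)$ or $\mathrm{Sz}(2^h)$.

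Next I would study $C := C_G(L) \triangleleft G$. Since $Z(L) = 1$, we have $C \cap L = 1$ and so $CL = C \times L$. If $C = 1$, then $G$ embeds in $\mathrm{Aut}(L)$ with image containing $\mathrm{Inn}(L) \cong L$, making $G$ almost simple with socle $L$; Lemma \ref{almostsimple} then delivers the contradiction. So $C \neq 1$. If $CL$ were proper in $G$, the non-cyclic subgroup $C \times L$ would have to admit a non-trivial partition, but Lemma \ref{classification} combined with Remark \ref{smallresults}(iv), Burnside's $p^a q^b$ theorem, and the structural observation that a direct product of two non-trivial normal subgroups is neither Frobenius nor almost simple, rules this out. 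Hence $CL = G$ and $G = C \times L$. A second application of the same dichotomy to $\langle c \rangle \times L$ for an element $c \in C$ of prime order $r$ forces $|C|$ itself to be prime, so $G = L \times C_r$.

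To close, I need to exhibit a proper non-cyclic subgroup $H < L$ such that $H \times C_r$ does not admit any non-trivial partition. By Lemma \ref{classification} and Remark \ref{smallresults}(iv), this amounts to requiring $H$ to be non-cyclic, of order not a power of $r$, and not of Hughes-Thompson type with prime $r$. For generic $L$ the Borel subgroup $R \rtimes T$ of $L$ suffices: by Remark \ref{smallresults}(ii) it is Hughes-Thompson only when $|T|$ is prime and equal to the HT-prime, a coincidence determined by the field parameter of $L$. In the exceptional configurations where $|T|$ coincides with $r$, I would fall back on a dihedral subgroup $D_{2|T|}$ (which is non-$r$-group and non-HT-at-$r$ when $r$ is odd) or, in the unique tiny case $L = A_5$ with $r = 2$, on $H = A_4$, which is Hughes-Thompson at the prime $3$ rather than $2$. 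Whenever $L$ admits a proper non-abelian simple subfield subgroup, that subfield works uniformly for every $r$. I expect the principal difficulty to lie in this last step: sweeping through Dickson's classification of subgroups of $\mathrm{PSL}(2, q)$ and Suzuki's analogue for $\mathrm{Sz}(2^h)$ to confirm that such an $H$ exists for every pair $(L, r)$.
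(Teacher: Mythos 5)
Your argument follows the same route as the paper's: pass to the simple group at the bottom of the derived series (the paper uses $G''$, you use $G^{\infty}$ --- the same subgroup in this situation), dispose of the case $\textbf{C}_G(L)=1$ via Lemma \ref{almostsimple}, force $G=L\times C_r$ by the direct-product dichotomy, and then exhibit a proper PF-subgroup of the form $H\times C_r$; your candidate witnesses (Borel subgroup generically, a dihedral subgroup when the Borel complement has prime order $r$, and $A_4$ in the exceptional case $L\cong A_5$, $r=2$) are exactly the subgroups the paper uses, merely tried in a different order. The only part you leave unexecuted is the final sweep you flag yourself, but the paper shows it is short and requires no appeal to Dickson's full subgroup classification: $C_r\times \mathrm{Sz}(2)$ with $\mathrm{Sz}(2)\cong C_5:C_4$ settles every Suzuki case and every $r$ at once (the complement has composite order $4$, so Remark \ref{smallresults}(ii),(iv) never apply), $C_r\times D_{2(p^{\alpha}-1)/(2,p^{\alpha}-1)}$ settles $\mathrm{PSL}(2,p^{\alpha})$ for odd $r$, and the Borel subgroup plus the $C_2\times A_4$ fallback settle $r=2$.
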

\begin{proof}
	
	On the contrary, suppose that $G$ is a non-solvable  minimal PF-group such that  $G'<G$. Since $G'$ is a non-solvable group admitting a non-trivial partition, it must be isomorphic to the almost simple groups of types ${\rm PGL(2,p^{\alpha})}$, ${\rm PSL}(2,p^{\alpha})$  or ${\rm Sz}(2^{\alpha})$, where $p$ is a prime and $\alpha$ a natural number. Therefore, $G''$ is isomorphic to ${\rm PSL}(2,p^{\alpha})$ or ${\rm Sz}(2^{\alpha})$.

	If ${\textbf{C}}_G(G'')=1$, then $G$ is an almost simple group with socle $G''$, and so applying Lemma \ref{almostsimple}, we get a contradiction.  
	Thus, $G''<H\cong{\textbf{C}}_G(G''){\times}G''\leq G$. Looking at the structure of non-solvable groups admitting non-trivial partitions, we see that  $H$ is a PF-group.      
	Hence, we must have $G\cong{\textbf{C}}_G(G''){\times}G''$.
	Now, assume that $r$ is a prime divisor of $|{\textbf{C}}_G(G'')|$.  Let  $M\cong C_r$ be  a  subgroup of ${\textbf{C}}_G(G'')$. Let also $N$ be a subgroup of $G''$. Assume that  $N\cong D_{2(p^{\alpha}-1)/(2,p^{\alpha}-1)}$ if $G''\cong {\rm PSL}(2,p^{\alpha})$, and $N\cong {\rm Sz}(2)\cong C_5: C_4$ if $G''\cong{\rm Sz}(2^{\alpha})$. If $G''\cong {\rm Sz}(2^{\alpha})$ or $r\not=2$, then  $M\times N$ is a non-cyclic proper PF-subgroup of  $G$, by Lemma \ref{classification}, Remark \ref{smallresults}(ii) and Remark \ref{smallresults}(iv), thus  we get a contradiction. Hence, $r=2$ and $G''\cong {\rm PSL(2,p^{\alpha})}$.  Let $T$ be a Frobenius subgroup of $G''$ isomorphic to $C_{p^{\alpha}}: C_{(p^{\alpha}-1)/(2, p^{\alpha}-1)}$. Hence, $M\times T$ is a PF-group by Lemma \ref{classification}, Remark \ref{smallresults}(ii) and Remark \ref{smallresults}(iv), except for $(p^{\alpha}-1)/2=2$. Therefore, $G''\cong A_5$, and so by Remark \ref{smallresults}(ii) and Remark \ref{smallresults}(iv), $C_2\times A_4$ is a proper PF-subgroup of $G$,  a contradiction.
\end{proof}
\begin{proposition}\label{solvable}
	If $G$ is a minimal PF-group, then $G$ is solvable.
\end{proposition}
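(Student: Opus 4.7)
The plan is a proof by contradiction: suppose $G$ is a non-solvable minimal PF-group, so by Lemma~\ref{perfect} $G$ is perfect. The goal is to exhibit a proper non-cyclic subgroup of $G$ which is itself a PF-group, contradicting minimality. I would branch on the structure of a minimal normal subgroup of $G$, combining CFSG, Thompson's classification of minimal simple groups, Lemma~\ref{almostsimple}, and the direct-product analysis from Remark~\ref{smallresults}(iv).

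First consider the case that $G$ is non-abelian simple. Since $G$ is a PF-group, Lemma~\ref{classification} excludes $G \cong \mathrm{PSL}(2,p^\alpha)$ and $G \cong \mathrm{Sz}(2^\alpha)$. Every proper non-cyclic subgroup of $G$ admits a non-trivial partition, so by Lemma~\ref{classification} any non-solvable proper subgroup is of type $\mathrm{PGL}(2,\cdot)$, $\mathrm{PSL}(2,\cdot)$, or $\mathrm{Sz}(\cdot)$. Combining this constraint with Lemma~\ref{almostsimple} applied to proper almost simple sections, together with Thompson's classification of minimal simple groups (via CFSG), and excluding the partition-admitting PSL$(2,\cdot)$ and Sz$(\cdot)$ families, the only remaining possibility is $G \cong \mathrm{PSL}(3,3)$. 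But a Sylow $2$-subgroup of $\mathrm{PSL}(3,3)$ is semidihedral of order $16$, which by Corollary~\ref{smallresultse}(ii) does not admit a non-trivial partition, producing the required proper PF-subgroup.

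Now suppose $G$ is not simple and let $N$ be a minimal normal subgroup. If $N$ is non-abelian, write $N = T_1 \times \cdots \times T_k$ with the $T_i$ isomorphic non-abelian simple; then $N$ is a proper non-cyclic subgroup and so admits a partition, and Lemma~\ref{classification} together with Remark~\ref{smallresults}(iv) forces $k = 1$, so $N = T$ is simple, necessarily $\mathrm{PSL}(2,p^\alpha)$ or $\mathrm{Sz}(2^\alpha)$. Set $C = \mathbf{C}_G(T)$. If $C = 1$ then $G$ is almost simple with socle $T$, contradicting Lemma~\ref{almostsimple}. If $C \neq 1$, then $TC \trianglelefteq G$ is the internal direct product $T \times C$ (since $T \cap C \le Z(T) = 1$), and by Remark~\ref{smallresults}(iv) this product does not admit a partition (as $T$, being simple non-abelian, cannot be Hughes--Thompson), so minimality forces $TC = G$. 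Perfectness then makes $C = C'$ perfect and non-cyclic, whence $C$ is likewise of type $\mathrm{PSL}(2,\cdot)$ or $\mathrm{Sz}(\cdot)$; choosing an involution $t \in C$, the subgroup $T \times \langle t\rangle$ is a proper non-cyclic PF-subgroup, the required contradiction. The remaining case, in which every minimal normal subgroup of $G$ is elementary abelian, is handled by a chief-series analysis: any proper subgroup of $G$ carrying a non-trivial abelian normal section together with a non-abelian simple section matches none of the six types in Lemma~\ref{classification}, yielding the desired PF-subgroup.

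The main obstacle is exactly this final abelian-radical case, because the minimal PF property does not obviously descend to the quotient $G/R$ by the solvable radical, so a clean induction is unavailable. One must instead construct the offending proper subgroup inside $G$ explicitly, exploiting Sylow theory and the incompatibility between harboring a non-trivial abelian normal subgroup and being almost simple of type $\mathrm{PGL}$, $\mathrm{PSL}$, or $\mathrm{Sz}$ in the sense of Lemma~\ref{classification}.
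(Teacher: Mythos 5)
Your skeleton for the non-abelian-socle cases matches the paper's (perfectness via Lemma~\ref{perfect}, the dichotomy $\textbf{C}_G(T)=1$ versus $G\cong T\times\textbf{C}_G(T)$, and the $T\times\langle t\rangle$ contradiction), and your observation that $\mathrm{PSL}(3,3)$ contains a semidihedral Sylow $2$-subgroup of order $16$ is a legitimate, arguably cleaner alternative to the paper's use of the maximal subgroup $3^2{:}2S_4$. But two gaps remain. The smaller one: the assertion that the simple case reduces to $\mathrm{PSL}(3,3)$ is exactly where the bulk of the paper's work lies. The constraint ``every proper non-solvable subgroup is of type $\mathrm{PGL}(2,\cdot)$, $\mathrm{PSL}(2,\cdot)$ or $\mathrm{Sz}(\cdot)$'' does not by itself force $G$ to be minimal simple (e.g.\ every maximal subgroup of $A_7$ is of an allowed type or solvable), so the paper must also deploy \emph{solvable} PF-subgroups such as $C_2\times C_2\times C_3$ and sweep through alternating, sporadic, Ree, linear, unitary and symplectic families one by one. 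Citing ``CFSG plus the lemmas'' does not discharge this.

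The serious gap is your final case, where every minimal normal subgroup is abelian. Your proposed mechanism --- find a proper subgroup carrying both a non-trivial abelian normal section and a non-abelian simple section --- cannot work in the critical subcase, which is precisely when $G/R$ is a minimal simple group and $R\neq 1$ (think of $\mathrm{SL}(2,q)$ or $3.A_6$): there every proper subgroup of $G$ is \emph{solvable}, so no proper subgroup has a non-abelian simple section at all. The paper's route is entirely different here: it takes preimages $T_i$ of dihedral (resp.\ Frobenius) subgroups of $S$, uses the classification of solvable partitioned groups to force the cyclic preimages $M_i$ to be nilpotent, concludes that $R/R'$ is central and hence embeds in the Schur multiplier $M(S)$, and then kills the finitely many resulting central extensions ($\mathrm{SL}(2,q)$ via $Q_8$, $3.A_6$ and $6.A_6$ via $C_3\times P$, the covers of $\mathrm{Sz}(8)$ via the preimage of $C_5{:}C_4$) by exhibiting explicit solvable PF-subgroups. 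Without an argument of this kind your proof does not close; you have correctly located the obstacle but not overcome it.
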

\begin{proof}
	Assume that $G$ is non-solvable and $R$ is the solvable radical of $G$.
	Let $M/R$ be a chief-factor of $G$. Therefore, $M/R\cong{S^k}$, where $S$ is a non-abelian simple group and $k\geqslant 1$ is an integer. Firstly, suppose that $M<G$. Since $S$ is non-solvable, by Lemma \ref{classification}, $M$ is isomorphic to ${\rm PSL}(2,p^h)$ or ${\rm Sz}(2^h)$, where $h\geqslant 1$ is an integer. If  ${\textbf{C}}_G(M)=1$, then $G$ is an almost simple group, which is a contradiction by Lemma \ref{almostsimple}. 
	Hence, $\textbf{C}_G(M)\neq 1$, and so ${\textbf{C}}_G(M){\times}M$ does not admit a non-trivial partition, implying that $G\cong M\times {\textbf{C}}_G(M)$. Since $M\times C_r$, where $r$ is a prime divisor of $|\textbf{C}_G(M)|$, embeds in $G$ and it is a PF-group, we conclude that $G\cong M\times C_r$, which is a contradiction since $G$ is perfect, by Lemma \ref{perfect}. 
	Therefore, $M=G$, and so $G/R\cong{S^k}$. Since $G/R$ is a chief-factor of $G$, we deduce that $k=1$.
	
	Now, we show that $G/R$ is a minimal simple group. 
	Assume that $G/R$ is not a minimal simple group, and $T/R$ is a non-solvable proper subgroup of $G/R$. Using Lemma \ref{classification}, we conclude that $T$ is isomorphic to the almost simple groups of types   ${\rm PGL}(2,p^\alpha)$, ${\rm PSL}(2,p^{\alpha})$ or ${\rm Sz}(2^{\alpha})$, where $\alpha\geqslant 1$ is an integer, so $R=1$ and $G\cong{S}$.
	
	We know that every alternating group of degree $n$ contains an alternating group of degree $n-1$ which is not isomorphic to the almost simple groups of types   ${\rm PGL}(2,p^\alpha)$, ${\rm PSL}(2,p^{\alpha})$ or ${\rm Sz}(2^{\alpha})$, where $\alpha\geqslant 1$ is an integer. Thus, $G\ncong A_n$, for $n>7$. Moreover, $A_7$ contains a PF-subgroup isomorphic to $C_2\times C_2\times C_3$ (see Corollary \ref{smallresultse}(i)), and so the only possibility  for $G$ among alternating groups is being isomorphic to $A_5$ or $A_6$, which is a contradiction (since $G$ does  not admit a non-trivial partition). By looking  at the maximal subgroups of sporadic groups and Tits group \cite{atlas}, we get that $G$ is not a sporadic or Tits group. Furthermore, \cite[Lemma 6]{reegroups} and \cite[Main Theorem]{reegroups2} imply that $G$ is not a Ree group. Let $G\cong {\rm PSL}(n,q)$, for some prime power $q$. Note that by Lemma \ref{classification}, ${\rm PSL}(2,q)$ admits a non-trivial partition. Notice also that ${\rm PSL}(3,3)$ is a minimal simple group. Hence, we do not consider them.  By the fact that SL$(n,q)$ contains a subgroup isomorphic to SL$(n-1,q)$ whose intersection  with the center of SL$(n,q)$ is trivial, we deduce that   ${\rm PSL}(n,q)$ contains a subgroup isomorphic to ${\rm SL}(n-1,q)$. Hence, by our assumption, we get a contradiction in case $n\neq3$ or $q$ is not a power of 2. Therefore, the only possibility for $G$ among projective special linear groups is $G\cong {\rm PSL}(3,q)$, where $q$ is a power of 2. For $q>4$, by \cite[Theorem 19]{psl3}, $G$ contains a subgroup isomorphic to $[q^2]:{\rm GL}_2(q)$, a contradiction. In addition, since ${\rm PSL}(3,2)\cong {\rm PSL}(2,7)$ is not a PF-group, and ${\rm PSL}(3,4)$ contains a non-solvable subgroup isomorphic to $2^4:A_5$, we get a contradiction. Hence, $G$ is not isomorphic to any projective special linear group. 
	Moreover, by \cite[Table 0A8]{table}, $G$ is not isomorphic to any twisted Chevalley group or Chevalley group, besides $B_2(q)$ and ${\rm PSU}(3,q)$, for some prime power $q$. Firstly, suppose that $G\cong B_2(q)={\rm PSp}(4,q)$. For $q$ odd, by \cite{psp}, $G$ contains a subgroup which is the central product of two groups isomorphic to ${\rm SL}(2,q_1)$ and ${\rm SL}(2,q_2)$, for suitable $q_1$ and $q_2$, a contradiction. If $q$ is even, then ${\rm PSp}(4,2)\cong S_6$ embeds in ${\rm PSp}(4,q)$, a contradiction.
	Finally, assume that $G\cong {\rm PSU}(3,q)$. If $q$ is odd, then by the structure of Sylow 2-subgroups of $G$ (see \cite[p.2]{psu3odd}), and Corollary \ref{smallresultse}(ii), we get a contradiction. Also, for $q$ even, by \cite[Lemma 2.3]{psuqeven}, $G$ contains a subgroup isomorphic to ${\rm PSL}(2,q)\times (q+1)/(3,q+1)$, a contradiction.
	
	Consequently, $G/R\cong{S}$ is a minimal simple group. Using Thompson's classification of minimal simple groups, one of the following cases occurs:
	
	$\bullet$ Let $G/R$ be isomorphic to ${\rm PSL}(2,q)$, where $q=p$, $2^p$ or $3^p$, for some prime $p$. We know that $G/R$ contains subgroups $T_1/R\cong{D}_{2d}$ and $T_2/R\cong{D}_{2f}$, where $d$ and $f$ are distinct divisors of $q-1$ and $q+1$, respectively. Therefore, $T_i$ must be either a Frobenius group, a group of Hughes-Thompson type, a 2-group or isomorphic to $S_4$, for $1\leqslant i\leqslant2$. 
	
	If $T_i$ is isomorphic to $S_4$, for some $1\leqslant i\leqslant 2$, then $R\cong C_2\times C_2$, and so  applying  Normalizer-Centralizer Theorem on $R$, we get that  $R=\textbf{Z}(G)$. Note that by Lemma \ref{perfect}, $G$ is perfect, and so $R\leq M(S)$, where $M(S)$ is the Schur multiplier of $S$, a contradiction since $|M(S)|=1,2,3$ or 6. Now, let $C_d\cong M_1/R\lhd T_1/R$ and $C_f\cong M_2/R\lhd T_2/R$. 
	If for some $i\in\{1,2\}$, $T_i$ is a Frobenius group, then $M_i$ is a subgroup of the Frobenius kernel. Also, if $T_i$ is a group of Hughes-Thompson type, then regarding Remark \ref{smallresults}(iii)   $M_i\leq H_2(T_i)$, and so $M_i$ is a nilpotent group. 
	
	Consequently, in all remaining cases, $M_i$ is nilpotent.
	Now, let $\overline{G}=G/R'$ and $\overline{R}=R/R'$. Thus, $\overline{R}={\overline{R}_d}\times{\overline{R}_{d'}}$, where ${\overline{R}_{d}}$  and ${\overline{R}_{d'}}$ are   Sylow $d$-subgroup and  $d$-complement of $\overline{R}$, respectively. Since $\overline{M_1}\leq{{\textbf{C}}_{\overline{G}}({\overline{R}_{d'}})}$, we get that ${{\textbf{C}}_{\overline{G}}({\overline{R}_{d'}})}=\overline{G}$, and so ${\overline{R}_{d'}}\leq{\textbf{Z}}(\overline{G})$. Similarly, ${\overline{R}_{f'}}\leq{\textbf{Z}}(\overline{G})$, and so $\overline{R}={\textbf{Z}}(\overline{G})$. Hence,  $\overline{G}/\overline{R}\cong{S}$ implies that  $\overline{R}\leq M(S)$ in which $ M(S)$ is trivial or isomorphic to $C_2, C_3$ or $C_6$.
	
	If $\overline{R}=1$, then $G\cong {\rm PSL}(2, q)$, a contradiction since PSL$(2,q)$ admits a non-trivial partition. If $\overline{R}\cong C_2$, then $R$ is a 2-group, and so $R=\overline{R}$. Hence, $G\cong {\rm SL}(2,q)$. Obviously, $q$ is odd, and so $G$ contains a subgroup isomorphic to the generalized quaternion group,  which is a contradiction by Corollary \ref{smallresultse}(ii). Hence, $\overline{R}\cong C_3$ or $C_6$, and $q=9$. 
	Thus, $G\cong 3. A_6$ or $6. A_6$. In both cases, $G$ contains a group isomorphic to  $H=C_3\times P$, where $P\in $ Syl$_2(G)$. But, regarding Lemma \ref{classification} and Remark \ref{smallresults}(iv), we get that $H$ does not admit a non-trivial partition, a contradiction.

	$\bullet$ Let  $G/R\cong S={\rm Sz}(q)$ with $q=2^p$, $T_1/R\cong{D}_{2(q-1)}$ and $T_2/R\cong{C}_{q+2r+1}:{C_4}$, where $p=2k+1$ and $r=2^k$. Therefore, similar to the above discussion, $R$ is nilpotent and  $\overline{R}=R/R'\leq M(S)$. If $p>3$, then $\overline{R}=1$, and so $G\cong \rm{Sz}(2^p)$ contradicting that $G$ is a PF-group. Thus, $G/R\cong{{\rm Sz}(8)}$ and $|\overline{R}|\not =1$ is a divisor of $4$. Let $L/R$ be a subgroup of $G/R$ isomorphic to the Frobenius group $C_5:{C_4}$. Since $R$ is a 2-group, $L$ is not a Frobenius group. Also  it cannot be  a group of Hughes-Thompson type by Remark \ref{smallresults}(iii). Hence, it does not admit a non-trivial partition, a contradiction.

	$\bullet$ Let $G/R\cong S= {\rm PSL}(3,3)$ and $T/R\cong{S}_{4}$. Note that $T$ must admit a non-trivial partition.  Regarding  that a Frobenius group does not have a factor isomorphic to $S_4$ and  observing Remark \ref{smallresults}(iii) imply that  the only possibility for $T$ is being isomorphic to $S_4$. Hence, $R=1$, and so  $G \cong{{\rm PSL}(3,3)}$.  Therefore, $G$ contains a maximal subgroup isomorphic to $3^2:2S_4$ which is a PF-group; in fact it is neither a Frobenius group nor isomorphic to $S_4$, also it does not have any normal nilpotent subgroup with prime index which means it is not a group of Hughes-Thompson type, a contradiction.
\end{proof}
\section{\bf{Classification of minimal PF-groups}}
In order to get the main result, we use the following proposition several times. 
\begin{proposition}\label{p3}
	Let $G$ be a finite $p$-group whose all proper non-cyclic subgroups admit non-trivial partitions. Let  $G$ have a non-cyclic maximal subgroup  and an element of order $p^a$,  for some $a>1$, outside of the  non-cyclic maximal subgroup. Then, $G$ is isomorphic to $C_p\times C_{p^2}$, $M_3(p)$ or $D_{2^n}$, for some natural number $n\geqslant 3$. 
\end{proposition}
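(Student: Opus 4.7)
The plan is to reduce to the case where $G$ has a cyclic maximal subgroup, whence Lemma~\ref{p-groups} classifies $G$, and then use the hypothesis on $M$ to cut the list down to the three families claimed. First, I replace $x$ by a generator of a maximal cyclic subgroup of $G$ containing the original $x$; the new $x$ still lies outside $M$ and has order $\geq p^{2}$, so I may assume $\langle x\rangle$ itself is a maximal cyclic subgroup of $G$.

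Next I produce an element $y\in N_G(\langle x\rangle)$ of order $p$ with $y\notin\langle x\rangle$. Since a $p$-group with a unique subgroup of order $p$ is cyclic or (for $p=2$) generalized quaternion, it suffices to exclude both possibilities for $N_G(\langle x\rangle)$. Cyclicity is impossible because $N_G(\langle x\rangle)$ strictly contains the proper subgroup $\langle x\rangle$ (normalizer property in a $p$-group), and a cyclic overgroup would contradict the maximality of $\langle x\rangle$ among cyclic subgroups. Generalized quaternion is ruled out by the hypothesis combined with Corollary~\ref{smallresultse}(ii): $Q_{2^{k}}=H_{2}(Q_{2^{k}})$ does not admit a non-trivial partition, so a proper generalized-quaternion $N_G(\langle x\rangle)$ contradicts the hypothesis directly, while $N_G(\langle x\rangle)=G\cong Q_{2^{k}}$ forces the non-cyclic maximal subgroups of $G$ to be $Q_{2^{k-1}}$, contradicting the choice of $M$. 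The subgroup $H=\langle x,y\rangle=\langle x\rangle\langle y\rangle$ then has order $p^{a+1}$ with $\langle x\rangle$ as a cyclic maximal subgroup, so by Lemma~\ref{p-groups} it lies in the list there; combining with Corollary~\ref{smallresultse}(ii) and $a\geq 2$, among the non-cyclic possibilities for $H$ only $D_{2^{a+1}}$ admits a non-trivial partition.

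The crux is to show $H=G$. For $p$ odd, $H$ must be $C_{p}\times C_{p^{a}}$ or $M_{a+1}(p)$, neither admitting a partition for $a\geq 2$; a proper $H$ would violate the hypothesis, so $H=G$. For $p=2$ the only way $H$ can be proper is $H\cong D_{2^{a+1}}$. Assuming this, I first observe that $C_G(x)=\langle x\rangle$: any element of $C_G(x)\setminus\langle x\rangle$ generates with $\langle x\rangle$ an abelian non-cyclic subgroup of exponent $>p$, hence not admitting a partition by Corollary~\ref{smallresultse}(i), and being proper in $G$ (the alternative would make $G$ abelian, contradicting the non-abelian subgroup $H$), it violates the hypothesis. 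Since $\langle x\rangle$ is characteristic in $H$ and $N_G(H)\supsetneq H$ in the $2$-group $G$, I pick $g\in N_G(\langle x\rangle)\setminus H$. The element $g$ acts on $\langle x\rangle$ as $x\mapsto x^{k}$ with $k\not\equiv\pm 1\pmod{2^{a}}$ ($k\equiv 1$ puts $g$ in $C_G(x)\subseteq H$, $k\equiv -1$ puts $g$ in $\langle x\rangle y\subseteq H$). A short analysis of $\operatorname{Aut}(C_{2^{a}})\cong C_{2}\times C_{2^{a-2}}$ shows one may choose $g$ with $g^{2}\in\langle x\rangle$ (otherwise $N_{G}(\langle x\rangle)/\langle x\rangle$ would be a non-trivial cyclic subgroup of $\operatorname{Aut}(C_{2^{a}})$ with $[y]=-1$ as its unique involution, forcing $N_{G}(\langle x\rangle)/\langle x\rangle=\langle -1\rangle$ and $N_G(\langle x\rangle)=H$, a contradiction). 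Then $\langle x,g\rangle$ has order $2^{a+1}$, is non-cyclic with cyclic maximal subgroup $\langle x\rangle$ and an action of type $1+2^{a-1}$ or $2^{a-1}-1$, so by Lemma~\ref{p-groups} it is isomorphic to $M_{a+1}(2)$ or $SD_{2^{a+1}}$, neither of which admits a partition by Corollary~\ref{smallresultse}(ii); being a proper subgroup of $G$, this violates the hypothesis, so $H=G$.

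With $|G|=p^{a+1}$ in hand, Lemma~\ref{p-groups} lists the possibilities for $G$, and the hypothesis on the non-cyclic maximal subgroup of $G$ sifts them: in $C_{p}\times C_{p^{a}}$ and $M_{a+1}(p)$ the non-cyclic maximal subgroup is $C_{p}\times C_{p^{a-1}}$, which admits a partition (by Corollary~\ref{smallresultse}(i)) only when $a=2$, giving $G\cong C_{p}\times C_{p^{2}}$ or $M_{3}(p)$; $Q_{2^{a+1}}$ and $SD_{2^{a+1}}$ are discarded because their non-cyclic maximal subgroups are generalized quaternion (or $Q_{8}$ has no non-cyclic maximal subgroup at all); and $D_{2^{a+1}}$ satisfies the hypothesis for every $a\geq 2$, giving the family $D_{2^{n}}$ with $n\geq 3$. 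The main obstacle I anticipate is the sub-case $p=2$, $H\cong D_{2^{a+1}}$, $H<G$, which demands the careful automorphism analysis above to pin down $M_{a+1}(2)$ or $SD_{2^{a+1}}$ as a proper non-cyclic subgroup of $G$ not admitting a partition.
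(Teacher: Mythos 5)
Your proof is correct, but it takes a genuinely different route from the paper's. The paper works through the Hughes subgroup of the given non-cyclic maximal subgroup $M$: after disposing of $|M|=p^2$ by inspection, it proves $H_p(M)\neq 1$ via a lengthy analysis (the abelian subgroup $\mathbf{Z}(G)\langle y\rangle$, a normal subgroup of order $p^3$, and an extraspecial-group contradiction), deduces that $H_p(M)$ and then $H_p(M)\langle y\rangle$ are cyclic, locates a dihedral subgroup $T$ forcing $p=2$, and finally invokes the Straus--Szekeres theorem ($|G:H_2(M)|=4$) to exhibit a cyclic maximal subgroup of $G$. You instead enlarge the given element to a generator $x$ of a maximal cyclic subgroup, extract an element $y$ of order $p$ from $N_G(\langle x\rangle)\setminus\langle x\rangle$ (correctly ruling out the cyclic and generalized quaternion normalizers), and argue that $H=\langle x\rangle\langle y\rangle$, which has a cyclic maximal subgroup, must be all of $G$; the only delicate case, $p=2$ with $H\cong D_{2^{b+1}}$ proper, you resolve by a direct analysis of $\operatorname{Aut}(C_{2^{b}})$ producing a proper $M_{b+1}(2)$ or $SD_{2^{b+1}}$ subgroup, which violates the hypothesis. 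I checked the key steps ($C_G(x)=\langle x\rangle$, the characteristicity of $\langle x\rangle$ in $H$, the involution count in $\langle -1\rangle\times\langle 5\rangle$, and the final sifting of Lemma \ref{p-groups}'s list against the existence of the non-cyclic maximal subgroup) and they hold; note only that when $o(x)=4$ your condition $k\not\equiv\pm 1\pmod 4$ is vacuously unsatisfiable, which kills the proper-$D_8$ subcase immediately — worth a sentence to make explicit. What your route buys is a shorter, self-contained argument that avoids both the citation of \cite{2groups} and the paper's page-long proof that $H_p(M)\neq 1$, and it handles $|M|=p^2$ uniformly; the paper's route has the advantage of staying within the Hughes-subgroup formalism used throughout the rest of the classification.
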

\begin{proof}
	Let $M$ be a non-cyclic maximal subgroup of $G$ and $y\in G\setminus M$ be an element of order $p^a$, for some $a>1$. 
	If $|M|=p^2$, then $|G|= p^3$, and so by the structure of groups having order $p^3$, we get the result. Hence, in the sequel, we suppose that $|M|\geq p^3$, and so $|G|\geqslant p^4$. Since $M$ admits a non-trivial partition, $H_p(M)<M$. We claim that $H_p(M)$ is not trivial.

	On the contrary, suppose that $H_p(M)=1$. Obviously, $o(y)=p^2$. Now, let $L_1={\bf Z}(G)\langle y\rangle$. Since $L_1$ is abelian and contains an element of order $p^2$, by the assumption that proper  non-cyclic subgroups of $G$ admit non-trivial partitions and Corollary \ref{smallresultse}(i), $L_1$ must be cyclic or $L_1=G$. If the latter case occurs, then,  as    $G$ is not cyclic,  $G$ has a subgroup  isomorphic to $ C_p\times C_{p^2}$. Thus, Corollary \ref{smallresultse}(i) yields that  $G\cong C_p\times C_{p^2}$, a contradiction. Hence, $L_1$ is cyclic. Moreover,  $L_1=\langle y\rangle$ since otherwise $y\in \textbf{Z}(G)$, and so by Corollary \ref{smallresultse}(i), $\langle x\rangle\times \langle y\rangle$, where $x\in M\setminus\langle y\rangle$,  is a proper PF-subgroup of $G$. Hence, $\textbf{Z}(G)=\langle  y^p \rangle $ since $y\notin \textbf{Z}(G)$.
	Let $N_1$ be a normal subgroup of $G$ of order $p^3$ contained in $M$. Then, $L_2=N_1L_1$ is  a subgroup of $G$.
	Note that if $L_2$ is abelian, then by the assumption and Corollary \ref{smallresultse}(i), $L_2$ must be cyclic or $L_2=G$. Note that  $N_1\leq L_2$ is a group of order $p^3$ of exponent $p$, and so $L_2$ cannot be cyclic. In addition, if $L_2=G$, then it causes a contradiction with the same argument as the one we have above. Hence, $L_2$ is  non-abelian.  Now, consider $L_2/{\bf Z}(G)$ which is a group of order $p^3$ and exponent $p$. Thus, $L_2/{\bf Z}(G)\cong N_1/{\bf Z}(G)\rtimes L_1/{\bf Z}(G)$. We claim that $L_2/{\bf Z}(G)$ is abelian. On the contrary, suppose that $L_2/{\bf Z}(G)$ is non-abelian. Let $K:={\bf{Z}}_2(L_2)L_1\leq L_2$, where ${\bf{Z}}_2(L_2)$ is the second center of $L_2$. Note that $|K:L_1|=p$ and $L_1$ is cyclic. Hence, by Lemma \ref{p-groups}, Corollary \ref{smallresultse}(ii), and the assumption that non-cyclic proper subgroups of $G$ admit non-trivial partitions, we conclude that $K\cong D_8$. Now, consider ${\bf{C}}_{L_2}(K)$. Obviously, ${\bf Z}(K)={\bf{C}}_{L_2}(K)\cap K \cong {\bf{Z}}(D_8)$, and so ${\bf Z}(K)={\bf{Z}}(G)$. In addition, if ${\bf Z}(K)<{\bf C}_{L_2}(K)$, then ${\bf C}_{L_2}(K)$ contains a subgroup $T$ of order $p$ outside of $K$, and so  $T{\bf Z}_2(L_2)/{\bf Z}(G)
	<{\bf{C}}_{L_2/{\bf Z}(G)}(L_1/{\bf Z}(G))$  is a group of order $p^2$ whose intersection with $L_1/{\bf Z}(G)$ is trivial. Hence,  $L_1/\textbf{Z}(G)$ acts trivially on $N_1/\textbf{Z}(G)$, a contradiction. Thus, $L_2/{\bf Z}(G)$ is an elementary abelian group, and ${\bf Z}(G)=L_2'$. Now, consider the abelian group ${\bf Z}(L_2)L_1$. Similar to the above discussion, by Corollary \ref{smallresultse}(i), we get that ${\bf Z}(L_2)L_1$ must be cyclic. Moreover, we may assume that ${\bf Z}(L_2)L_1=L_1$ since otherwise $y\in \textbf{Z}(L_2)$, and so $\langle x\rangle\times \langle y\rangle$, where $x\in N_1\setminus\langle y^p\rangle$, is a proper PF-subgroup of $G$, by Corollary \ref{smallresultse}(i). Hence, ${\bf Z}(L_2)={\bf Z}(G)=L_2'$, and so $L_2$ is an extraspecial group of order $p^4$, a contradiction. 
	
	Therefore, $H_p(M)>1$. Since  $H_p(M)$ admits a non-trivial partition, and $H_p(H_p(M))$ $=H_p(M)$, we get that $H_p(M)$ is cyclic.    On the other hand, $H_p(H_p(M)\langle y \rangle)$=$H_p(M)\langle y \rangle$, implying that $H_p(M)\langle y \rangle$ does not admit a non-trivial partition. Thus,  $H_p(M)\langle y \rangle=G$ or $H_p(M)\langle y \rangle$ is cyclic.   
	
	First, assume that $G=H_p(M)\langle y \rangle$. We show that  $ y^p\in H_p(M)$.  As,  $({\bf Z}(G)\cap H_p(M))\langle y \rangle$  is abelian, we conclude that $({\bf Z}(G)\cap H_p(M))\langle y \rangle= \langle y \rangle $, by the same argument we have in the above.   If $o(y)=p^2$, then $y^p\in ({\bf Z}(G)\cap H_p(M))$, as wanted. If $o(y)>p^2$, then as $y^p\in M$, we get that $y^p\in H_p(M)$, as desired.
	Hence, $H_p(M)$  is a maximal  subgroup of $G$, contradicting $H_p(M)\not =M$. 
	
	Thus, $H_p(M)\langle y \rangle$ is cyclic. On the other hand,  $M$ contains a subgroup $T$ such that $H_p(M)< T$ and $|T:H_p(M)|=p$. By our hypothesis, $T$ admits a non-trivial partition and by Lemma \ref{p-groups} and Corollary \ref{smallresultse}(ii), we get that $T\cong D_{2^m}$, for some natural number $m\geqslant 3$.
	
	Consequently, $p=2$, $G$ is a 2-group, and by \cite[p.1]{2groups}, $|G:H_2(M)|=4$. Thus, $H_2(M)\langle y\rangle$ is a  cyclic maximal subgroup of $G$. Lemma \ref{p-groups} implies that  $G$ is isomorphic to $D_{2^n}$, for some natural number $n\geqslant 3$.
\end{proof}

{\bf Proof of the main theorem.}
According to Section 3, we just need to prove the  "only if" part of the main theorem. Note that by Proposition \ref{solvable}, $G$ is solvable. Assume that $M$ is a maximal normal subgroup of $G$, and so $|G:M|=p$, where $p$ is a prime dividing $|G|$. Since $G$ is a minimal PF-group, by Lemma \ref{classification}, one of the  following  cases occurs:

\bigskip

\textbf{Case (I)\,} Let $M$ be cyclic. 

\bigskip

Firstly, suppose that $|G|=p^n$. Then, $G$ satisfies the hypotheses of  Lemma \ref{p-groups}. Regarding Corollary \ref{smallresultse}(ii), $D_{2^n}$ admits a non-trivial partition for every $n\geqslant 3$, and so $G\ncong{D_{2^n}}$. In addition, by the fact that the generalized quaternion groups contain the quaternion group of order $8$, and $Q_8$ does not admit a non-trivial partition, the only possibility for $G$ among the generalized quaternion groups is being isomorphic to ${Q_8}$.  Similarly, $G$ is not isomorphic to the semidiherdral group of order $2^n$. Therefore, it remains to consider the groups $C_p\times{C_{p^{n-1}}}$ and $M_n(p)$ for $n\geqslant 3$. Note that by Corollary \ref{smallresultse}(i), $C_p\times{C_{p^{\alpha}}}$, with $\alpha\geqslant 2$, does not admit a non-trivial partition. Also, $M_n(p)$, with $n\geqslant 4$, contains a subgroup isomorphic to $C_p\times{C_{p^{n-2}}}$. Thus, the only possibility for $G$ among remaining groups is being isomorphic to $M_3(p)\,\, (p\neq 2)$ or $C_p\times{C_{p^2}}$. Hence, in this case, $G\cong Q_8$, $M_3(p)\,\, (p\neq 2)$ or $C_p\times{C_{p^2}}$,  the  groups described in part (1)-(3) of the main theorem.

Now, assume that $G$ is not a $p$-group, and $|G|_p=p^n$, for some natural number $n$. By the assumption, $G\cong F\rtimes{P}$, where $P\in{{\rm Syl}_p(G)}$ and $F$ is a cyclic $p$-complement in $G$. We know that $G\cong (F_1: P)\times F_2$, where $F_1=[P, F]$ and $F_2=C_F(P)$ (by Fitting's Lemma \cite[Theorem 4.34]{isaacs}). First, let $n=1$. Since $G$ is a minimal PF-group, $G$ is not a cyclic group or a Frobenius group, and so $|F_1|,|F_2|>1$. In addition, if $q$ and $r$ are  prime divisors (not necessarily distinct) of $|F_1|$ and $|F_2|$, respectively, then $(C_q: C_p)\times C_r$ is  the described group in example (5) or (7) of Section 3, is a PF-subgroup of $G$. Hence, $G\cong (C_q: C_p)\times C_r$ satisfies part (5) or (7) of the main Theorem. Now, assume that $n>1$. By Lemma \ref{p-groups} and the assumption, $P$ is isomorphic to $C_{p^n}$, $C_p\times{C_p}$ or $D_{2^n}$. 

Let  $P$ be  cyclic. We claim that $G$ is a minimal non-cyclic group. By the assumption, $G$ is not cyclic, and so there exists a prime $q$ dividing $|F|$ such that $P$ acts on the Sylow $q$-subgroup of $F$ non-trivially. Using Fitting's Lemma (\cite[Theorem 4.34]{isaacs}), the action of $P$ on the subgroup of $F$ of order $q$, say $Q$, is also non-trivial. Since $n>1$ and $M$ is cyclic, $H\cong Q: P$ is not a Frobenius group.  Therefore, regarding the structure of solvable groups admitting a non-trivial partition and Remark  \ref{smallresults}(ii), $H$ is a PF-subgroup of $G$, implying that $G\cong Q: P$. Thus, it is easy to see that $G$ is a quasi-Frobenius group in which the Frobenius kernel of $G/{\bf Z}(G)$ is a group of order $q$, and Frobenius complements of $G/{\bf Z}(G)$ are isomorphic to $C_p$. Moreover, ${\bf Z}(G)\cong C_{p^{n-1}}$. Now, we show that all proper subgroups of $G$ are cyclic. Note that $p$-subgroups and  the $q$-subgroup of $G$ are cyclic. Thus, consider a proper subgroup of $G$ whose order is divided by $pq$, say $T$. If a Sylow $p$-subgroup of $T$ contained in  ${\bf{Z}}(G)$, then $T$ is cyclic. Otherwise, $|P_T|=|P|$, where $P_T\in {\rm Syl}_p(T)$, implying that $T=G$, a contradiction. It means that all proper subgroups of $G$ are cyclic. Therefore, the described group $G$, is a minimal non-cyclic group. By \cite{minimalcyclic}, $G$ is isomorphic to the described group in part (6) of the main theorem.

Let $P\cong C_p^2$.  Hence,  according to Fitting's  Lemma,  either $G$ has a subgroup isomorphic to  $H={C_{q}}\times{C_p^2}$,  for some prime $q$ dividing $|F|$, or $G\cong T\times{C_{p}}$, where $T$ is a Frobenius group isomorphic to ${C_{|F|}}:{C_p}$. The latter case  does not occur,  since  by Remark \ref{smallresults}(ii) and Remark \ref{smallresults}(iv),  $G$ is a group of Hughes-Thompson type, and so it admits a non-trivial partition. Hence, the former case holds.  Since by Corollary \ref{smallresultse}(i), $H$ does not admit a non-trivial partition, $G=H$, which is the group  described in part (4) of the main theorem.

Let $P\cong D_{2^n}$. Then, $G\cong C_{|F|}\rtimes{D_{2^n}}$. Thus,  we have $G\cong D_{2^nf'}\times C_{{|F|}/f'}$ in which $f'$ is a divisor of ${|F|}$.  If $f'\neq |F|$, then $H\cong C_2^2\times C_{|F|/f'}$, by Corollary \ref{smallresultse}(i), is a PF-subgroup of $G$, a contradiction. Consequently, $G\cong D_{2^n{|F|}}$ which is a group of Hughes-Thompson type ($H_2(G)$ is a cyclic group of index $2$ in $G$), a contradiction.

\bigskip

\textbf{Case (II)\,} Let $M$ be a non-cyclic $q$-group with $H_q(M)\neq{M}$, for some prime $q$. 

\bigskip

Firstly, suppose that $q=p$, and so $G$ is a $p$-group. Since $H_p(M)<M$ and $H_p(G)=G$, we get that $G$ satisfies the hypotheses of Proposition \ref{p3}. Hence, $G\cong M_3(p)$ with $p\geqslant 3$ or $C_p\times C_{p^2}$, as desired.

Now, assume that $q\not=p$, and so $|G|=q^\alpha p$, where $\alpha$ is a natural number. Then, $G\cong{M}\rtimes{P}$, where $P$ is a Sylow $p$-subgroup of $G$.
Let $C={\textbf{C}}_{M}(P)$ and $L=CP\cong C\times P$. Note that $C$ is not trivial (since otherwise $G$ is a Frobenius group and so $G$ admits a non-trivial partition).  Note that   $C$ can  not be  isomorphic to any generalized quaternion group, since otherwise it does not admit a non-trivial partition. If $C$ is not cyclic, then by Corollary \ref{smallresultse}(i), $G$ contains a PF-subgroup isomorphic to $C_q^2\times C_p$, implying that $G\cong C_q^2\times C_p$, as wanted. Hence, we may assume that $C$ is cyclic, and so $C<M$. 

Now, suppose that $G$ contains a minimal normal subgroup $N$ such that $L\cap N=1$. Since $NL$ is not a Frobenius group, a cyclic group or isomorphic to $S_4$, we deduce that either $NL$ is a group of Hughes-Thompson type or $G=NL$.  Taking into account   Remark \ref{smallresults}(ii), the former case is not possible. 
Thus, $G=NL\cong C_q^{a}: C_{pq^b}$, for some natural numbers $a$ and $b$.
Note also that by the minimality of $N$, $N\leq {\bf Z}(M)$.  Thus, $G\cong (C_q^a : C_p)\times C_{q^{b}}$. If $b>1$, then $G$ contains a PF-subgroup which is a contradiction.  Consequently,  $G\cong (C_q^a : C_p)\times C_{q}$, which is the described group in part (7) of the main theorem.  

In the sequel, assume that all minimal normal subgroups of $G$ have a non-trivial intersection with $L$. We claim that $H_q(M)=1$.
On the contrary, suppose that $H_q(M)\neq 1$, and so $H_q(M)\cap C\neq 1$. Noting that $H_q(H_q(M))=H_q(M)$, we get that $H_q(M)$ is cyclic, and so $H_q(M)P$ must be a proper cyclic subgroup of $G$ (by applying Fitting's Lemma). Consequently, $H_q(M)\leq C < M$, and so $H_q(M)=C$ since $C$ is cyclic. Let $T/H_q(M)$ be a minimal normal subgroup of $G/H_q(M)$. Hence, $T/H_q(M)$ is either a cyclic group of order $p$ or an elementary abelian $q$-group. If ${T/H_q(M)}\cong{C_p}$, then $T$ is cyclic, and  so $G\cong M\times P$, a contradiction since $C=H_q(M)< M$. Thus, ${T/H_q(M)}$ is an elementary abelian $q$-group. Suppose that $PT<G$. Since $PT$ is not a cyclic group, a Frobenius group or isomorphic to $S_4$, by the assumption, the only possibility for $PT<G$ is being a group of Hughes-Thompson type. Now, looking at Remark \ref{smallresults}(ii) we get a contradiction. 
Consequently, $G=PT$. Note that $\textbf{Z}(G)$ is a $q$-group since otherwise $P\unlhd G$, and so $M=C$, a contradiction. Hence, ${\bf Z}(G)\leq C=H_q(M)$. Note that $\textbf{C}_G(H_q(M))\cong \textbf{C}_M(H_q(M))\rtimes P$. If $H_q(M)<\textbf{C}_M(H_q(M))<M$, then exactly similar to the above discussion (we had for $PT$), we get that $\textbf{C}_G(H_q(M))$ is a proper PF-subgroup of $G$, a contradiction. Thus, $\textbf{C}_M(H_q(M))= H_q(M)$ or $ M$. If $\textbf{C}_M(H_q(M))= H_q(M)$, then $P$ is a normal subgroup of $ C_G(H_q(M))$,  implying that $P$ is normal in $G$ and so $M=C$ which is a contradiction. Hence, $C_M(H_q(M))= M$, implying that $H_q(M)=\textbf{Z}(G)$. As a result, $G$ contains a proper PF-subgroup isomorphic to $H_q(M)\times C_q$, a contradiction. Thus, $H_q(M)=1$, and so the Sylow $q$-subgroup of $G$ has exponent $q$.

Let $H\leq \textbf{Z}(M)\cap  C$ be a group of order $q$, and $S/H$ be a minimal normal subgroup of $G/H$. If $S/H$ is a group of order $p$, then $P\unlhd G$, a contradiction.  Hence, $\overline{S}=S/H$ is an elementary abelian $q$-group. Let also $\overline{P}=PH / H$ and $K/H=\textbf{C}_{\overline{S}}(\overline{P})$. If $K/H\neq 1$, then since $C$ is cyclic and $H_q(M)=1$, we must have $|K|=q$, a contradiction. 
Consequently, $PS/H$ is a Frobenius group. Hence, looking at the structure of solvable groups admitting a non-trivial partition and Remark \ref{smallresults}(ii), $PS$ is a PF-group, and so we conclude that $G=PS$ and $H={\bf Z}(G)$. Therefore, $G$ satisfies part (7) of the main theorem. 

\bigskip

\textbf{Case (III)\,} Suppose that $M$ is a Frobenius group in which $K$ is the Frobenius kernel and $H$ is a Frobenius complement.

\bigskip

We pursue this case, by proving the following steps:

\bigskip

{\bf Step 1.} $H$ is cyclic and $K$ is either a cyclic group or a $q$-group for some prime $q$. 

\bigskip

First, we show that $H$ is cyclic.  It is well-known that Sylow subgroups of Frobenius complements are either cyclic groups or     generalized quaternion groups.  Since generalized quaternion groups are PF-groups, the Sylow subgroups of $H$ must be cyclic. Thus, we may assume that $|H|$ has at least two distinct prime divisors. It also is well-known that Frobenius complements and their subgroups cannot be Frobenius, and so $H$ is not a Frobenius group or isomorphic to $S_4$. Therefore, $H$ is either a group of Hughes-Thompson type or a cyclic group.  Assume that $H_r=H_r(H)$ and  $|H:H_r|=r$, for some prime divisor $r$ of $|H|$. 
Since the Sylow subgroups of $H$ are cyclic, $r\nmid |H_r|$. Consequently, $H\cong H_r: R$, where $R\cong C_r$, and so applying Remark \ref{smallresults}(ii)  $H$ must be a Frobenius group, a contradiction. As a result, $H$ is cyclic, as wanted. 

Furthermore, since $K$ is nilpotent, by the expressed argument following Lemma \ref{classification}, it is either a cyclic group or a $q$-group for some prime divisor $q$ of $|K|$.

\bigskip

{\bf Step 2.} If $p$ is a divisor of $|M|$, then $p$ divides $|K|$.

\bigskip

On the contrary, suppose that $p$ is a divisor of $|H|$, and $H_0$ is a complement of $K$ in $G$ containing $H$. Hence, $|P_{H_0}:P_H|=p$, where $P_{H_0}$ and $P_H$ are Sylow $p$-subgroups of $H_0$ and $H$, respectively. Since $P_H$ is cyclic, by Lemma \ref{p-groups}, $P_{H_0}$ is a cyclic group, an elementary abelian group of order $p^2$ or a dihedral group of order $2^n$, for some natural number $n\geqslant 3$.

If $P_{H_0}$ is cyclic, then using  Remark \ref{smallresults}(ii), $KP_{H_0}$ must be a Frobenius group or equal to $G$ (it can not be isomorphic to $S_4$). Let $G=KP_{H_0}$ and $x$ be a generator of $P_{H_0}$. Then,  since $G$ is not a Frobenius group, $1\neq C_{K}(x)\leq C_{K}(x^p)$, contradicting that $\langle x^p\rangle=P_H$.  Thus, $KP_{H_0}$ and so  $G$ is a Frobenius group, a contradiction.

Therefore, in the remaining possibilities for $P_{H_0}$, we have $G\cong M\rtimes P_1$,  where $P_1\cong C_p$. Note that if $KP_1$ is a  group of Hughes-Thompson type, then  by Remark \ref{smallresults}(ii), $KP_1$ is a   Frobenius group which will be considered. Hence, $KP_1$ is either a Frobenius group or a cyclic group (notice that $KP_1$ is a proper subgroup of $G$ and it is not isomorphic to $S_4$). If $KP_1$ is a Frobenius group, then $G$ is a Frobenius group, a contradiction. Thus, $KP_1$ is cyclic. As, $KP_1=C_G(K)\unlhd G$, we have $P_1\unlhd G$ and so $G/K\cong P_1\times H$.
Consequently, $P_{H_0}\not \cong D_{2^n}$. Note that if there exists a prime divisor $r\neq p$ of $|H|$, then by Remark \ref{smallresults}(ii) and Remark \ref{smallresults}(iv), $G$ contains a proper PF-subgroup isomorphic to $P_1\times (K: R)$, where $|R|=r$, a contradiction. Therefore, $H$ is a $p$-group, and noticing that  $P_{H_0}=C_p^2$, we have  $G\cong C_p\times (K: C_p)$ which is a group of Hughes-Thompson type by Remark \ref{smallresults}(ii) and Remark \ref{smallresults}(iv), a contradiction.

\bigskip

{\bf Step 3.} If $p$ is a divisor of $|M|$ and $K$ is non-cyclic, then $o(x)_p=p$ for all $x\in G\setminus M$. 

\bigskip

By Step 2, $p$ is a divisor of  $|K|$. Suppose that $K$ is a non-cyclic group. Then, $K$ is a  $p$-group with $H_p(K)\neq K$. Assume also that there exists an element $x\in G\setminus M$ of order $p^a$, for some natural number $a>1$. Let $P$ be a Sylow $p$-subgroup of $G$ containing $x$. Therefore, $P$ satisfies the hypotheses of Proposition \ref{p3}, and so $P\cong D_{2^n}$, for some $n\geqslant 3$. On the other hand, $H_2(K)$ is cyclic since $H_2(H_2(K))=H_2(K)$. Therefore, $H_2(K)=1$ since otherwise $H$ acts Frobeniusly on the subgroup of order two of $H_2(K)$. As a result, $K$ is an elementary abelian $2$-group, and by the structure of $P$, the only possibility for $K$ is being isomorphic to $C_2\times C_2$. Consequently, $|H|=3$, $M\cong A_4$, and $|G/M|=2$. 
Therefore, $G\cong S_4$, a contradiction,  as $S_4$ admits a non-trivial partition.  Hence, we have ${o(x)}_p=p$, for every element $x\in G\setminus M$.

\bigskip

{\bf Step 4.} If $p$ is a divisor of $|M|$ and $K$ is non-cyclic, then $G$ satisfies part (7) of the main theorem.

\bigskip

If $G/K$ is a Frobenius group, then by Step 3, $o(x)=p$, for every $x\in G\setminus M$, and so $M\cup \{\langle x \rangle | {x\in P\setminus M} \}$ forms a non-trivial partition for $G$, a contradiction. 
Thus, there exists $x_0\in G\setminus M$ of order $p|H_0|$, such that 
$C=\textbf{C}_G({x_0}^{|H_0|})\cong P_0: H_0$, where $P_0\in {\rm Syl}_p(C)$ and $H_0=C_H(x_0^{|H_0|})$.
Note that $|P_0|>p$, and so  $|{\bf Z}(C)|=p$. Consequently, $C\cong \langle {x_0}^{|H_0|}\rangle \times (K_0: H_0)$, where $1\neq K_0\leq K$. By Remark \ref{smallresults}(ii) and Remark \ref{smallresults}(iv), $C$ is a PF-group,
implying that $C=G$. Thus,
$G\cong C_p\times M$, and so $M$ must be a minimal Frobenius group since otherwise $G$ contains a proper PF-subgroup. 
Hence, $G\cong C_p\times (C_p^a : C_q)$, which is the described group in part (7) of the main theorem.

\bigskip

{\bf Step 5.} If $p$ is a divisor of $|M|$, then   $G$ satisfies part (7) of the main theorem. 

\bigskip

By Step 4, we may assume that $K$ is cyclic and by step 2, $p$ is a divisor of $|K|$. By Lemma \ref{p-groups}, $P\in {\rm Syl}_p(G)$ is a cyclic group, an elementary abelian group of order $p^2$ or a dihedral group of order $2^n$, for some natural number $n\geqslant 3$. 

If $P\cong{D_{2^n}}$, then $H$ acts  Frobeniusly on the Sylow $2$-subgroup of $K$, and so on its subgroup of order $2$, which is a contradiction.

In the two other cases, $\textbf{C}_G(K_p)=K_{p'}P$,  where  $K_p$ and $K_{p'}$ are the Sylow $p$-subgroup and  $p$-complement of $K$, respectively. Therefore,  $H$ acts co-primely on $\textbf{C}_G(K_p)$. Without loss of generality, we may assume that $P$ is $H$-invariant.  Then, $PH$ contains a subgroup $PL$, where $L\leq H$ is of prime order. 

If $P$ is cyclic, then by Fitting's Lemma (\cite[Theorem 4.34]{isaacs}), $PL$ is a cyclic group or a Frobenius group. Regarding that $P$ has a non-trivial intersection with $K$ the first case does not occur. Also,  taking into account that    $G/K$ contains a normal Sylow $p$-subgroup  $C_G(K_p)/K\cong PK/K$ and a normal $p$-complement $M/K\cong H$,  we have $PL$ cannot be a Frobenius group, a contradiction.  

Hence, $P\cong{C_p\times C_p}$.  Recalling that $HK/K$ and $PK/K$ are normal in $G/K$, we obtain that $G=PL\cong C_p\times (C_p: L)$ is satisfying  example (7) of Section 3, as wanted.

\bigskip

{\bf Step 6.}  If $p$ is not  a divisor of $|M|$, then $G$ satisfies part (5) of the main theorem. 

\bigskip

Assuming that $p$ is not a divisor of $|M|$,  $K_0\cong K\rtimes P$, where $P\in {\rm Syl}_p(G)$. Obviously, $K_0$ is not isomorphic to $S_4$. In addition, if $K_0$ is a Frobenius group, then $HP$ acts Frobeniusly on $K$, implying that $G$ is a Frobenius group, a contradiction. Therefore, $K_0$ must be a cyclic group or a group of Hughes-Thompson type.  Using Remark \ref{smallresults}(ii), the latter case does not occur, and so $K_0$ is cyclic. Note also that $K_0=C_G(K)\unlhd G$. As a result,  $G\cong M\times P$. Assume that $L\cong C_q: C_r\leq M$  is a  minimal Frobenius group, then    $G$ contains a  subgroup isomorphic to $L\times P$, the described group in example (5) of Section 3.  Hence, we must have $G\cong C_p\times ( C_q:{C_r})$, where $p,q$ and $r$ are distinct primes, as described in part (5) of the main theorem.

\bigskip

\textbf{Case (IV)\,} Suppose that $M$ is a group of Hughes-Thompson type,  with $H_q(M)<M$ for some prime $q$.

\bigskip

We proceed through this case by showing the following steps:

\bigskip

{\bf Step 1.} $H_q(M)$ is cyclic and it  is divided by $q$ and a prime other than $q$. Moreover $Q$ is isomorphic to either  $C_q^2$ or $D_{2^n}$, $n\geq 3$, where $Q\in $ Syl$_q(M)$.

\bigskip

In this case  $|M:H_q(M)|=q$ and  $M\cong H_q(M):{C_q}$. If $q$ does not divide $|H_q(M)|$, then regarding Remark \ref{smallresults}(ii), $M$ is a Frobenius group discussed in the previous case. Thus, we may assume that $q$ divides $ |H_q(M)|$. Since $H_q(M)$ is nilpotent, by the argument following Lemma \ref{classification}, $H_q(M)$ is either a $q$-group or a cyclic group. The first case implies that $M$ is a $q$-group discussed before. 
Hence, in the sequel, let $H_q(M)$ be a cyclic group whose order  is divided by  at least two distinct prime divisors. Then,  $Q\cap H_p(M)$ is a  cyclic maximal subgroup of $Q$,  for $Q\in {\rm Syl}_q(M)$. Hence, according to Corollary \ref{smallresultse}(ii),  $Q$ is isomorphic to either $C_q^2$ or $D_{2^n}$, for some natural number $n\geqslant 3$ ($Q$ is not cyclic since otherwise $Q\leq H_q(M)$). 

\bigskip

{\bf Step 2.} $p$ is a divisor of $|M|$.

\bigskip

On the contrary, suppose that $p\nmid |M|$. Let $K$ be a Hall $\{p,q\}$-subgroup of $G$, and so $Q=K\cap M\unlhd K$. Hence, $K\cong Q\rtimes P$, where $P\in {\rm Syl}_p(G)$. If $Q\cong D_{2^n}$, then since ${\rm Aut} (D_{2^n})$ is a $2$-group, by the Normalizer-Centralizer Theorem, we conclude that $P\leq \textbf{C}_K(Q)$, and so $C_2^2\times C_p$ is a proper PF-subgroup of $K$, a contradiction. Thus, $Q\cong C_q^2$. Hence, $K$ is isomorphic to the quasi-Frobenius group $(C_q: C_p)\times C_q$, $C_q^2\times C_p$ or a Frobenius group. Since $K$ admits a non-trivial partition,  looking at examples (4) and (7) of Section 3, $K$ must be a Frobenius group. Now, let $H$ be a subgroup of $G$ isomorphic to $H_q(M): P$. Since $K$ is a Frobenius group, $H$ is not cyclic, and so $H$ must be a Frobenius group ($H$ is not isomorphic to $S_4$ and if it is a group of Hughes-Thompson type, Remark 
\ref{smallresults}(ii) yields that $H$ is a Frobenius group), implying that $P$ acts on $M$  Frobeniusly, a contradiction. Thus, $p$ is a divisor of $|M|$.

\bigskip

{\bf Step 3.} $q$ must be equal to $p$.

\bigskip

Suppose that $q\not =p$, $P\in {\rm Syl}_p(G)$ and $P_0\in {\rm Syl}_p(M)$. By Step 1, $H_q(M)$ is cyclic, and so by Lemma \ref{p-groups}, $P\cong D_{2^m}$, for some $m\geqslant 3$,  $P\cong C_p^2$ or $P$ is cyclic. Let $P\cong D_{2^m}$, then $M$ contains a subgroup isomorphic to $ C_q$ acting on the $q$-complement of $H_q(M)$ Frobeniusly, by Remark \ref{smallresults}(i). Hence $M$ has a Frobenius subgroup isomorphic to $P_0:C_q$, where $P_0$ is a cyclic 2-group, a contradiction.

Now, assume that $P\cong C_p^2$. Hence, $\textbf{C}_G(P_0)=PH_q(M)$. Thus, the normality of $P_0$ in $G$ implies that $G/H_q(M)$ contains a normal Sylow $p$-subgroup. In addition, $M/H_q(M)$ is the normal Sylow $q$-subgroup of $G/H_q(M)$, and so $G/H_q(M)$ is a cyclic group of order $pq$. Let $K$ be a Hall $\{p,q\}$-subgroup of $G$. Then, $M_0=K\cap H_q(M)$ is a normal subgroup of $K$ and $K/M_0$ is a cyclic group of order $pq$. Recall that by Step 1,  $Q$ is isomorphic to either  $C_q^2$ or $D_{2^n}$. Then, we have $M_0\cong C_{pq}$ or $C_{2^{n-1}p}$. In both cases, $|{\rm Aut}(M_0)|$ is not divisible by $r=max\{p,q\}$, and so $|K/\textbf{C}_K(M_0)|$ is not divisible by $r$. Therefore, $\textbf{C}_K(M_0)$ contains a subgroup isomorphic to $C_r^2\times C_s$, where $s=min\{p,q\}$, a contradiction.

Hence, $P$ is a cyclic group of order greater than $p$. Since $H_q(M)P<G$ does not admit  a non-trivial partition, we deduce  that $H_q(M)P$ is cyclic (notice that $P\cap H_q(M)$ is not trivial and so $H_q(M)P$ is not a Frobenius group, Hughes Thompson type or isomorphic to $S_4$).  Therefore, by Remark \ref{smallresults}(i), $C_G(H_q(M))=H_q(M)P$. As $M/H_q(M)\unlhd G/H_q(M)$,  $G/H_q(M)$ has a normal Sylow $q$-subgroup and Sylow $p$-subgroup, and so $G/H_q(M)$ is isomorphic to $C_{pq}$. Recall that $Q\cong (Q\cap H_q(M))\rtimes Q_0$, where $Q_0\cong C_q$. If $K$ is a Hall $\{p,q\}$-subgroup of $G$, then $G/H_q(M)\cong K/(K\cap H_q(M))$.  Note also that by Fitting's Lemma (\cite[Theorem 4.34]{isaacs}), $K/(Q\cap H_q(M))\cong P\rtimes Q_0$ is either a cyclic group or a Frobenius group. Since $G/H_q(M)$ is cyclic, we get that $K/(Q\cap H_q(M))$ must be cyclic which is a contradiction since $Q_0$ acts Frobeniusly on $P\cap H_q(M)$. 

\bigskip

{\bf Step 4.} For all $x\in P\setminus M$, we have $o(x)=p$, where $P\in $ Syl$_p(G)$. 

\bigskip  

By Steps 1 and 3,  we get that $|P|\geqslant p^3$, where $P\in {\rm Syl}_p(G)$. Suppose that there exists $x\in P\setminus M$ such that $o(x)>p$. Let $H= H_p(M) \langle x\rangle$. Easily we see that,  $H$ is neither  isomorphic to $S_4$ nor  a Frobenius group. Notice that $o(x)>p$, by Steps 1 and 3, $H_p(M)$ is cyclic.
Thus, $H$ is not a group of Hughes-Thompson type. Therefore, $H$ is  cyclic or $H=G$. In the first case, $|P:P_H|=p$, for $P_H\in {\rm Syl}_p(H)$. Note that $P$ is not cyclic since  $M\not =H_p(M)$. Hence, by Lemma \ref{classification}, $P$ is isomorphic to $D_{2^m}$, for some natural number $m\geqslant 3$.  Therefore,   $p=q=2$.  
Hence, $|G:H|=2$, and  since $M\cong (H\cap M): Q_0$, where $Q_0\cong C_2$, we conclude that $G\cong H: Q_0$. Note that the   $2$-complement of $G$ is a subgroup of $H_2(M)$ and Remark \ref{smallresults}(i) yields that   $Q_0$ acts Frobeniusly on the $2$-complement of $G$. Hence,  we conclude that $G\cong D_{2^mn}$, where $n$ is the order of the $2$-complement of $G$ and so $G$ is a group of Hughes-Thompson type, a contradiction. Thus, $G=H=H_p(M)\langle x\rangle$. Note that $x^p\in M$, and so $M=H_p(M)\langle x^p\rangle$. If $o(x)>p^2$, then $o(x^p)>p$, and so $H_p(M)=M$, a contradiction. Therefore, $o(x)=p^2$,      $G\cong H_p(M):\langle x\rangle$, and so $G$ contains a proper PF-subgroup isomorphic to $C_{p}\times C_{p^2}$ (notice that by Step 1, $p=q$ divides $H_p(M)$), which is a contradiction.
Therefore, for every $x\in P\setminus M$, we have $o(x)=p$.

\bigskip

{\bf Step 5.} Final contradiction (in this case $G$ cannot be  a minimal PF-group). 

\bigskip

By Steps 1 and 3, $H_p(M)$ is cyclic, and so $G$ contains a cyclic  normal $p$-complement $H$ contained in $H_p(M)$.    Hence,  $G= H \cup (\cup_{g\in G}  P^g)$. On the other hand, by Steps 1 and 3, the sylow $p$-subgroup of M is not cyclic, implying that $P^g\cap M$ admits a non-trivial partition. 
Therefore, it remains to cover elements in $P^g\setminus (P^g\cap M)$, for all $g\in G$. By Previous step, all elements in those sets would be covered by cyclic groups of order $p$, and so $G$ admits a non-trivial partition, a contradiction.

\bigskip

\textbf{Case (V)\,} Suppose that $M$ is isomorphic to $S_4$.
\bigskip

Since ${\rm Aut}(M)={\rm Aut}(S_4)\cong S_4$, by the Normalizer-Centralizer Theorem, we get that $G/\textbf{C}_G(M)$ is  isomorphic to $M$. Therefore, $\textbf{C}_G(M)\cong C_p$, and so $G\cong C_p \times  S_4$. Consequently, $G$ contains a proper subgroup isomorphic to $A_4\times C_p$. Since for $p\neq 3$, by Remark \ref{smallresults}(ii) and Remark \ref{smallresults}(iv), $A_4\times C_p$ is a PF-group, we must have $p=3$. But in the case $p=3$, by Corollary \ref{smallresultse}(i), $G$ contains a proper PF-subgroup isomorphic to $C_2^2\times C_3$, a contradiction.
\bigskip




\end{document}